\newcommand{\indentalign}{\hspace{0.3in}&\hspace{-0.3in}}
\newcommand{\la}{\langle}
\newcommand{\ra}{\rangle}
\renewcommand{\Re}{\operatorname{Re}}
\renewcommand{\Im}{\operatorname{Im}}
\newcommand{\sech}{\operatorname{sech}}
\newcommand{\defeq}{\stackrel{\rm{def}}{=}}
\newcommand{\cR}{\mathbb{R}}
\newcommand{\cF}{\mathcal{F}}
\newcommand{\ds}{\displaystyle}
\newcommand{\supp}{\mathrm{supp}\,}
\newcommand{\cH}{\mathcal{H}}
\newtheorem{theorem}{Theorem}[section]
\newtheorem{proposition}{Proposition}[section]
\newtheorem{lemma}[proposition]{Lemma}
\theoremstyle{remark}
\newtheorem*{remarks}{Remarks}
\numberwithin{equation}{section}
\title[3d NLS contracting sphere blow-up]{A solution to the focusing 3d NLS that blows up on a contracting sphere}
\author{Justin Holmer}
\address{Brown University, Providence, RI, USA}
\email{holmer@math.brown.edu}
\author{Galina Perelman}
\address{Universit\'e Paris-Est Cr\'eteil, Cr\'eteil Cedex, France}
\email{galina.perelman@u-pec.fr}
\author{Svetlana Roudenko}
\address{The George Washington University, Washington, DC, USA}
\email{roudenko@gwu.edu}
\thanks{to appear in Transactions of the AMS}
\begin{document}

\maketitle

\begin{abstract}
We rigorously construct radial $H^1$ solutions to the 3d cubic focusing NLS equation $i\partial_t \psi + \Delta \psi + 2 |\psi|^2\psi=0$ that blow-up along a contracting sphere.  With blow-up time set to $t=0$, the solutions concentrate on a sphere at radius $\sim t^{1/3}$ but focus towards this sphere at the  faster rate $\sim t^{2/3}$.   Such dynamics were originally proposed heuristically by Degtyarev-Zakharov-Rudakov \cite{DZR} in 1975 and independently later in Holmer-Roudenko \cite{AMRX} in 2007, where it was demonstrated to be consistent with all conservation laws of this equation.  In the latter paper, it was proposed as a solution that would yield divergence of the $L_x^3$ norm within the ``wide'' radius $\sim \|\nabla u(t)\|_{L_x^2}^{-1/2}$ but not within the ``tight'' radius $\sim \|\nabla u(t)\|_{L_x^2}^{-2}$, the second being the rate of contraction of self-similar blow-up solutions observed numerically and described in detail in Sulem-Sulem \cite[Chapter 7]{SS}.
\end{abstract}

\section{Introduction}

Consider the 3d cubic focusing NLS equation on $\mathbb{R}^3$:
\begin{equation}
\label{E:NLS-1}
i\partial_t \psi + \Delta \psi + 2|\psi|^2\psi=0,
\end{equation}
where $\psi=\psi(x,t)\in \mathbb{C}$ and $x\in \mathbb{R}^3$, $t \in \mathbb{R}$.  The initial-value problem posed with initial-data in $H_x^1$ is locally well-posed and there is a unique solution in $C([0,T); H_x^1)$ on a maximal forward life-span $[0,T)$.  If $T<\infty$, then $\lim_{t\nearrow T} \|\nabla \psi(t) \|_{L_x^2} = \infty$.

During their lifespan,
the solutions $\psi$ to \eqref{E:NLS-1} satisfy mass, energy  and momentum conservation laws.
\begin{equation}
\label{E:MASS}
M(\psi(t)) \equiv \int |\psi(x,t)|^2 dx=\int |\psi_0(x)|^2 dx = M(\psi_0),
\end{equation}

\begin{equation}
\label{E:ENERGY}
E(\psi (t))\equiv\int (|\nabla \psi(x,t)|^2 - |\psi(x,t)|^4)\, dx=E(\psi_0),
\end{equation}

\begin{equation}
\label{E:MOMENTUM}
P(\psi(t))\equiv \int (\bar \psi \nabla \psi-\psi \nabla\bar \psi)\, dx=P(\psi_0).
\end{equation}

The case $T<\infty$ occurs for a large class of initial data (see the discussion in Holmer-Platte-Roudenko \cite{HR-above}) and in this case we say that the solution \emph{blows-up in finite time}.  Numerical results (see Sulem-Sulem \cite[Chapter 7]{SS}) describe the existence of self-similar radial blow-up solutions of the form
\begin{equation}
 \label{E:atorigin}
u(x,t) \approx \frac{1}{\lambda(t)}U\left( \frac{x}{\lambda(t)}
\right) e^{i\log(T-t)} \text{ with }\lambda(t)=\sqrt{2b(T-t)},
\end{equation}
where $U=U(x)$ is a stationary profile
satisfying the nonlinear elliptic equation
\begin{equation}
\label{E:usual-profile}
\Delta U - U + i b ( U + y\cdot\nabla U) +2 |U|^2 U =0.
\end{equation}
Numerics suggest that $b>0$ can be selected so that a nontrivial \emph{zero-energy} solution $U$ to \eqref{E:usual-profile} exists.  Asymptotics as $|x|\to \infty$ show that any solution of \eqref{E:usual-profile} fails to belong to $\dot H_x^{1/2}$ (and hence $H_x^1$).  Nevertheless, it is expected that $H_x^1$ blow-up solutions to \eqref{E:NLS-1} well-modeled by \eqref{E:atorigin} \emph{near the origin} exist, and the proof remains an important open problem.

In this paper we consider only radial solutions, thus, $P(\psi) = 0$.
We construct a family of finite-time blow-up solutions to \eqref{E:NLS-1} with different dynamics -- they focus toward a sphere, while at the same time the radius of sphere is shrinking.
We study the problem by converting it (via the time-reversal, $u(t) \mapsto \bar u(t)$,
and time-translation symmetry of \eqref{E:NLS-1}) to one with solutions defined on $(0,t_0]$
for $t_0>0$ that ``start'' at $t_0>0$ and blow-up at $t=0$ as they evolve backward in time.

Our main result is the following:

\begin{theorem}
\label{T:MainTheorem}
For any $e\in \cR$ there
exists a radial solution $\psi \in C((0,t_0],H^1(\cR^3))$ of \eqref{E:NLS-1}
with $E(\psi)=e$,
which starts at time $t_0 > 0$ and, while evolving backwards in time, blows-up at $t=0$, and has the following form:
\begin{equation}
\label{E:Psi-1}
\psi(r,t) = e^{i\theta(t)} e^{iv(t)r/2} \lambda(t) \, \varphi\big(\lambda(t)(r-q(t)) \big) + h(r,t), \quad r = |x|, x \in \cR^3,
\end{equation}
where $\varphi (r) = \sech (r)$ and the time-dependent parameters $(q, v, \lambda, \theta)$ satisfy
\begin{equation}
\label{E:params-0}
\begin{aligned}
& q(t) = q_0 t^{1/3}               &\qquad&  q_0 = 2^{1/3}3^{1/6}\\
& \lambda(t) = \lambda_0 t^{-2/3}  && \lambda_0=2^{-2/3}3^{-1/3} \\
& v(t) = v_0 t^{-2/3}              &&  v_0= 2^{1/3}3^{-5/6} \\
& \theta(t) = \theta_0 t^{-1/3}    && \theta_0= 2^{-1/3}3^{-2/3}
\end{aligned}
\end{equation}
and
\begin{equation}
\label{E:h-est}
\|h(t)\|_{L^2(\cR^3)}+t^{2/3}\|h(t)\|_{H^1(\cR^3)} +t^{-1/3}\||x|\,h(t)\|_{L^2(\cR^3)} \leq C \, t^{1/3}.
\end{equation}
Note that it follows from \eqref{E:params-0} and \eqref{E:h-est} that
$$
\|\nabla \psi(t)\|_{L^2(\mathbb{R}^3)} \sim t^{-2/3} \quad \text{ and } \quad \||x|\,\psi(t)\|_{L^2(\cR^3)}\sim t^{1/3}.
$$
\end{theorem}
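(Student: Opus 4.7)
The plan is a modulation/bootstrap analysis on the backward-in-time problem $t \in (0, t_0]$. I would first recast the radial 3d equation as a 1d-type problem on the half-line by setting $u(r,t) = r\psi(r,t)$, so that $u$ satisfies $i\partial_t u + \partial_r^2 u + 2 r^{-2} |u|^2 u = 0$ on $r>0$ with the Dirichlet condition $u(0,t) = 0$. Because the soliton thickness $\lambda(t)^{-1} \sim t^{2/3}$ shrinks faster than the sphere radius $q(t) \sim t^{1/3}$ (the ratio is $t^{1/3} \to 0$), on the support of the ansatz the coefficient $r^{-2}$ can be Taylor-expanded at $r = q(t)$. At leading order the $u$-equation is therefore 1d cubic focusing NLS with a slowly varying nonlinear coupling, and this is precisely what forces the profile to be the 1d ground state $\varphi = \sech$, which satisfies $-\varphi'' + \varphi - 2\varphi^3 = 0$.

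I would then substitute the ansatz \eqref{E:Psi-1} into \eqref{E:NLS-1} and decompose the residual along the four generalized null modes of the 1d linearized operator $\mathcal L$ (associated with the symmetries of translation, Galilean boost, $L^2$-scaling, and phase) plus an orthogonal remainder. Setting the four null components to zero yields a coupled ODE system for $(q,v,\lambda,\theta)$. A dominant-balance analysis forces the leading relation $\dot q = v$ together with ODEs for $v$, $\lambda$, $\theta$ whose right-hand sides come from the curvature expansion of $r^{-2}$ at $r=q$; these are solved at leading order precisely by the explicit power laws \eqref{E:params-0}, with the constants $q_0, v_0, \lambda_0, \theta_0$ fixed by algebraic matching of the balance identities.

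Next, write $\psi = \psi_{\mathrm{ans}} + h$. The remainder satisfies a non-autonomous linearized NLS of the schematic form $i\partial_t h + \Delta h + \mathcal{V}(t) h + \mathcal{W}(t) \bar h = F(t) + N(h)$, where $F$ is the residual of the ansatz (small in the norms of \eqref{E:h-est}) and $N(h)$ is quadratic and higher in $h$. I would construct $h$ by a contraction mapping on the complete space of radial $C((0,t_0]; H^1)$ functions satisfying \eqref{E:h-est} with $h(t) \to 0$ as $t \to 0^+$. To close the estimates one imposes four symplectic orthogonality conditions on $h(t)$ relative to the generalized null space of $\mathcal L$; these conditions are maintained by allowing the modulation parameters $(q,v,\lambda,\theta)$ to vary away from their leading profiles by small corrections, producing only negligible perturbations of the modulation ODEs.

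The main obstacle is to build the propagator for the linearized $h$-equation on $(0,t_0]$ and obtain an energy estimate uniform as $t \to 0^+$, in spite of coefficients whose amplitudes diverge ($\mathcal V$ has height $\lambda^2 \sim t^{-4/3}$, concentrated on the shrinking sphere). Two ingredients are essential: (i) a coercivity estimate for the 1d linearized operator $\mathcal L$ on the symplectic complement of its generalized null space, inherited from the stability theory of the $\sech$ soliton, and (ii) a careful removal of the large Galilean phase $e^{ivr/2}$ with $v \sim t^{-2/3}$ by conjugation, so that it does not degrade the energy bound. Once the propagator is controlled, the contraction closes in the weighted norm of \eqref{E:h-est}, and the one-parameter freedom in the choice of $h(t_0)$ (scaling along a zero mode of $\mathcal L$) is used to prescribe $E(\psi) = e$ for any $e \in \cR$ by a continuity/intermediate-value argument.
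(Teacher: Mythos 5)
There is a genuine gap, and it is the central one. Your plan builds the ansatz only to leading order (the power laws \eqref{E:params-0} from a dominant balance) and then tries to close a contraction for $h$ directly in the class \eqref{E:h-est}, i.e.\ $\|h\|_{L^2}\lesssim t^{1/3}$. But the linearized evolution around the concentrating profile does not admit estimates that are uniform as $t\to 0^+$: any energy/Lyapunov inequality for the remainder produces a term of size $t^{-1}\|h\|_{X^1}^2$ (this is exactly \eqref{E:G-h-upperbound} in the paper, and it is forced by the scaling $\lambda\sim t^{-2/3}$ of the potential), so integrating toward $t=0$ gives $\|h(t)\|_{X^1}^2\lesssim \frac{C}{\alpha}t^{2\alpha}+\dots$ when $\|h\|_{X^1}\le t^{\alpha}$. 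The bootstrap closes only because the prefactor $1/\alpha$ can be made small, i.e.\ only if $\alpha$ is large — and that in turn requires the residual of the ansatz to vanish to arbitrarily high polynomial order in $t^{1/3}$. A leading-order ansatz leaves a forcing that is merely one power of $t^{1/3}$ better than the main term, and no contraction in the class $h=O(t^{1/3})$ can absorb the resulting Gronwall losses. This is precisely why the paper devotes \S\ref{S:construction} to constructing $\psi^{(N)}$ for \emph{arbitrary} $N$, with error $O(t^{(2N-1)/3})$ as in \eqref{E:H-N}, and why Proposition \ref{P:Cauchy-N} requires $N$ large (the $1/N$ gains in \eqref{E:A41}, \eqref{E:A40B}, \eqref{E:A40C}). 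The same issue afflicts your treatment of the four neutral directions: their ODEs also lose powers of $t$ and only close with the high-order ansatz. Moreover, the recursive construction is not routine — at order $k=5$ the Fredholm solvability condition \eqref{P1: S5} is not automatic from parity and has to be verified by a separate energy-expansion argument; your proposal does not anticipate any such obstruction.

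Two secondary points. First, you propose to solve for $h$ on all of $(0,t_0]$ with $h\to 0$ at $t=0$; the paper instead solves an $\epsilon$-regularized family with data $\psi(\epsilon)=\psi^{(N)}(\epsilon)$, proves estimates uniform in $\epsilon$, and passes to the limit by compactness (Rellich plus the $H^2$ and variance bounds of Proposition \ref{P:H2-variance}). Your route is viable in principle but only after the order-counting problem above is fixed, and you would still need to say in which direction each neutral mode is integrated. Second, the prescription $E(\psi)=e$ is done in the paper not by shooting on $h(t_0)$ but by exploiting the free coefficient $q_2$ in the expansion so that $E(\psi^{(N)})\to e$; your intermediate-value alternative is plausible but would need an argument that the energy varies continuously and surjectively along the one-parameter family you describe.
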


\begin{remarks}

1.  Let $\varphi$ be a solution of the nonlinear elliptic equation
\begin{equation}
\label{E:groundstate}
-\varphi + \Delta \varphi + 2 |\varphi|^2 \varphi = 0.
\end{equation}
The unique (up to translation) minimal mass $H^1$ solution of \eqref{E:groundstate}
is called the {\it ground-state}. It is smooth, radial, real-valued and positive, and exponentially decaying (see,
for example, Tao \cite[Apx. B]{Tao-book}). Note that $\psi (x,t) = e^{it} \varphi (x)$ is a solution of \eqref{E:NLS-1}. In one dimension, i.e., $d = 1$, the ground state of \eqref{E:groundstate} is explicit, namely, $\varphi(x) = \sech(x)$. It is this function we use for the profile of the blow-up in \eqref{E:Psi-1} (and not the 3d ground state solution of \eqref{E:groundstate}).

2. It follows from \eqref{E:params-0} that the solutions given by Theorem \ref{T:MainTheorem}
satisfy $\|\psi\|_{L^2(\cR^3)}=\|\varphi\|_{L^2(\cR)}= \sqrt 2$. Applying scaling, one then obtains
contracting sphere blow up solutions with an arbitrary mass.

3. We do not address in this note the questions of stability of these solutions (the numerical and heuristical results of \cite{FGW}  indicate that the behavior \eqref{E:Psi-1}, \eqref{E:params-0} should be stable with respect to radial perturbations). Still, let  us mention that  the arguments developed in the proof of Theorem \ref{T:MainTheorem}
can be easily adjusted  in order to produce  blow up solutions of the form
$$
\psi(t)=S(t)+f+o_{H^1}(1), \quad t\rightarrow 0,
$$
where $S (t)$ is a solution of \eqref{E:NLS-1} given by Theorem \ref{T:MainTheorem},
and $f$ is an arbitrarily smooth radial decaying function vanishing to a sufficiently high order
at the origin, very much in the spirit of \cite{BW}.

4. A numerical visualization of the blow-up described by Theorem \ref{T:MainTheorem} is given in Figure 1, courtesy of R. Platte, Arizona State University.
\begin{figure}[h]
\begin{center}
\includegraphics[width=2.7in]{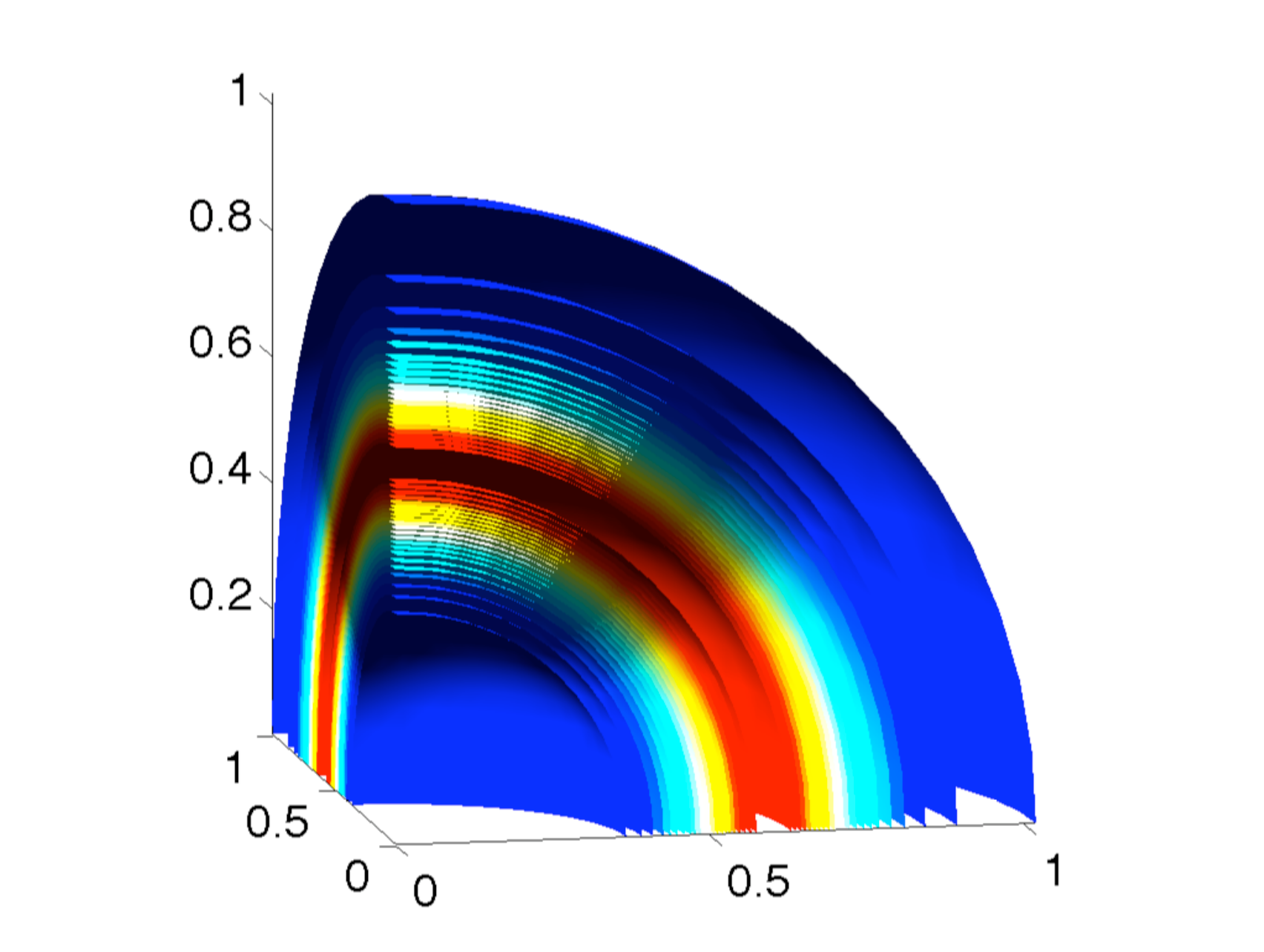}\hspace{-1.5cm}\includegraphics[width=2.7in]{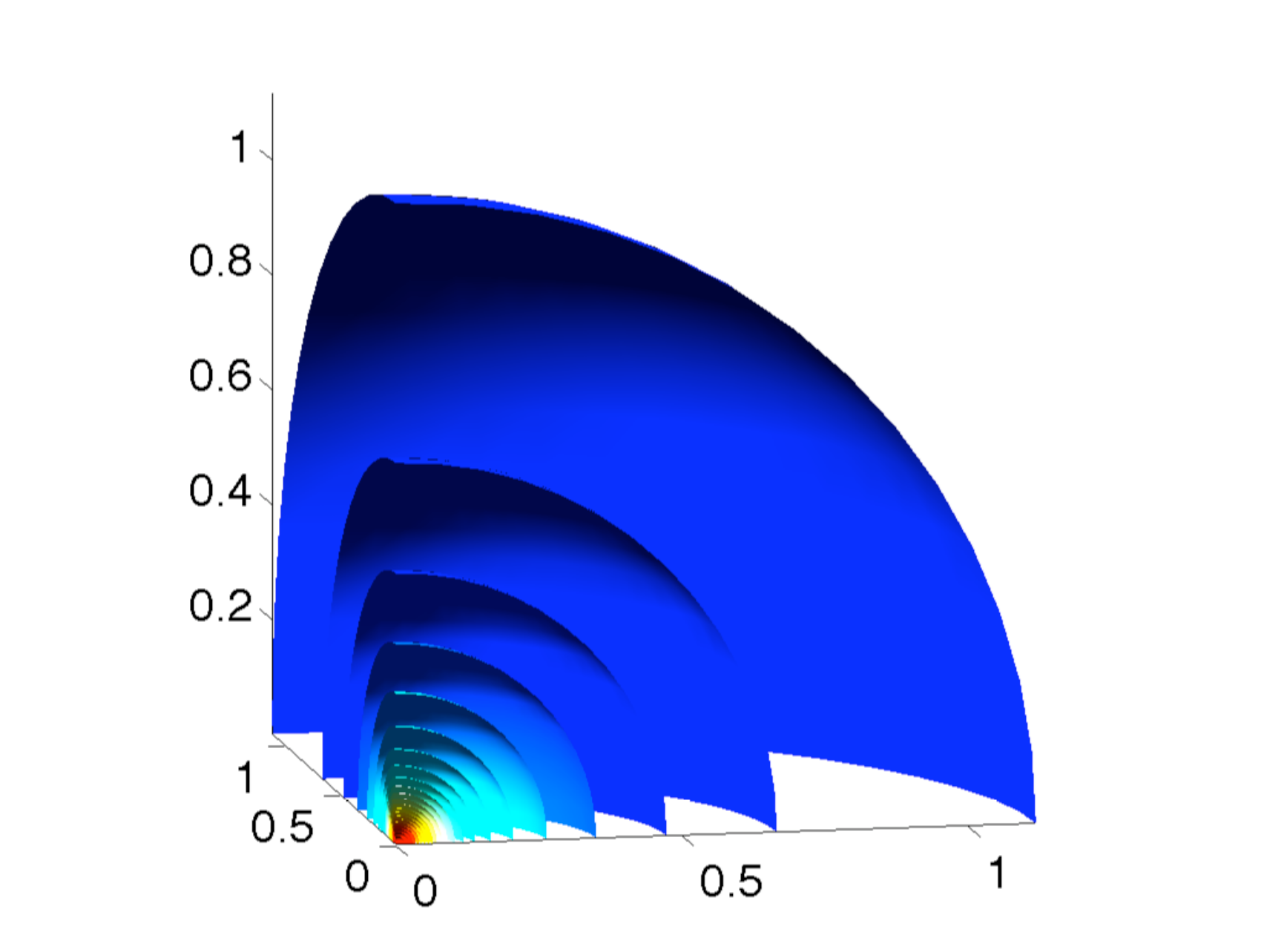}
\end{center}
\caption{\small{A contracting sphere collapse in the 3d cubic NLS: (a) a concentrating layer of the collapsing solution is around $r\sim 0.6$, (b) the eventual collapse at the origin. Courtesy of R. Platte, Arizona State University.}}
 \label{fig:collapse}
\end{figure}
\smallskip

\end{remarks}

The paper is organized as follows:    In \S \ref{S:construction}, we construct (Prop. \ref{P:1}) for arbitrary $N\in \mathbb{N}$, an approximate solution $\psi^{(N)}$ satisfying
\begin{equation}
\label{E:intro-stuff1}
i\partial_t \psi^{(N)} + \Delta \psi^{(N)} + 2|\psi^{(N)}|^2 \psi^{(N)} = iH^{(N)},
\end{equation}
where
$$
t^{2k/3}\|\nabla^k H^{(N)}(t)\|_{L^2(\mathbb{R}^3)} \lesssim_k t^{(2N-1)/3} \,,\qquad 0< t \leq t_0(N),
$$
with implicit constant independent of $N$.    To accomplish this, we set
\begin{equation}
\label{E:intro-stuff4}
\psi^{(N)}(r,t) = e^{i\theta} e^{ivr/2} \lambda \,U^{(N)}(\rho, t),
\end{equation}
where $\rho = \lambda(r-q)$ is the recentered and rescaled spatial variable and $q$, $v$, $\theta$, $\lambda$ are $t$-dependent parameters.  We then seek to suitably define the parameters and profile $U^{(N)}$.  We impose the \emph{a priori} conditions
$$
\left\{
\begin{aligned}
& v = q' \\
& \theta'= - \tfrac14 v^2 - \tfrac12 v' q + \lambda^2,
\end{aligned}
\right.
$$
which achieve convenient simplifications of the equation that arises when \eqref{E:intro-stuff4} is substituted into \eqref{E:intro-stuff1}, and enable us to solve for $v$ and $\theta$, once $q$ and $\lambda$ have been specified.  For additional convenience, we set $\omega = \lambda^{-1}q^{-2}$ (effectively replacing $\lambda$ by $\omega$) and\footnote{In fact, once $\chi$ is constructed, we take $U^{(N)}$ to be a \emph{cutoff} of $\varphi + \chi$.} $\chi = U^{(N)} - \varphi$ (recall $\varphi(\rho) = \sech\rho$).  Then \eqref{E:intro-stuff1} will follow from
\begin{equation}
\label{E:intro-stuff2}
| \partial_\rho^\ell (\mathcal{H} \vec \chi + \ell(q,\omega)\vec\chi + \mathcal{F}(q, \omega) + F^{\geq 2}(\vec\chi))| \leq C_{\ell, \mu} \,t^{(2N+3)/3} e^{-\mu |\rho|} \,, \quad 0<\mu<1,
\end{equation}
where $\mathcal{H}$ is the 1D cubic linearized Hamiltonian -- $\mathcal{H}$, $\ell$, $\mathcal{F}$, and $F^{\geq 2}$ are defined precisely in \S \ref{S:construction}.  We then seek a solution to \eqref{E:intro-stuff2} by expanding
\begin{equation}
\label{E:intro-stuff3}
\omega(t)= \sum_{k=0}^{N} \omega_kt^{2k/3} \,, \qquad q(t)=\sum_{k=0}^{N} q_kt^{(2k+1)/3} \,, \qquad \chi(\rho,t)=\sum_{k=1}^{2N+2} t^{k/3}\chi_k(\rho) \,.
\end{equation}
Substituting the expansions \eqref{E:intro-stuff3} into \eqref{E:intro-stuff2}, we can organize terms by their order in $t$ and carry out a recursive construction.  The parameter coefficients $q_k$ and $\omega_k$ are selected at each stage to satisfy solvability conditions for $\chi_{k+1}$.

In \S \ref{S:comparison}, we consider the exact solution $\psi$ with data $\psi|_{t=\epsilon} = \psi^{(N)}(\epsilon)$ for any $0<\epsilon \leq t_0(N)$, and prove (Prop. \ref{P:Cauchy-N}) that for $N$ sufficiently large, the comparison estimate
\begin{equation}
\label{E:intro1}
\sum_{k=0}^2 t^{2k/3}\| \nabla^k(\psi(t) - \psi^{(N)}(t)) \|_{L^2(\mathbb{R}^3)} \lesssim t^{2N/3} \,, \qquad \epsilon \leq t \leq t_0(N),
\end{equation}
and localization bound (Prop. \ref{P:H2-variance})
\begin{equation}
\label{E:intro2}
\| x \psi(t) \|_{L^2(\mathbb{R}^3)} \lesssim t^{1/3}
\end{equation}
hold, independently of $0<\epsilon \leq t_0(N)$.
The estimates \eqref{E:intro1} and \eqref{E:intro2} are proved using a Lyapunov functional $G(h)$ (see \eqref{E:G-def}), built from energy, mass, and radial momentum.

Finally, in \S 4, we complete the proof of Theorem \ref{T:MainTheorem} using estimates \eqref{E:intro1}, \eqref{E:intro2} and a compactness argument.
\smallskip

This result was announced and described in detail \cite{G2011} by G. Perelman in the  Analysis seminar at Universit\'e de Cergy-Pontoise in December 2011.  In February 2012, Merle, Rapha\"el, \& Szeftel posted a paper \cite{MRS} on arxiv.org proving a closely related result.  We stress that our result was obtained independently of \cite{MRS}. 

\subsection{Acknowledgements}
J.H. was supported in part by NSF grant DMS-0901582 and a Sloan Research Fellowship (BR-4919). S.R. was supported in part by NSF grant DMS-1103274.  This work was also partially supported by a grant from the Simons Foundation \# 209839 to S.R.

\section{Construction of an approximate solution}
\label{S:construction}

We start with writing a solution to \eqref{E:NLS-1} as
\begin{equation}
\label{E:Psi-2}
\psi(r,t) = e^{i\theta(t)} e^{iv(t)r/2} \lambda(t) \, U \big(\lambda(t)(r-q(t)), t \big).
\end{equation}
Substituting it into \eqref{E:NLS-1} and introducing a rescaled coordinate
$$
\rho = \lambda (r-q),
$$
we obtain that $U = U(\rho, t)$ solves
\begin{multline}
\label{E:unforced-U}
\qquad i \, \frac1{\lambda^2} \, U_t + U_{\rho \rho} - \frac1{\lambda^2} \left( \theta^\prime + \frac{v^2}{4} + \frac{v^\prime \, q}{2} + \frac{v^\prime \rho}{2\lambda}\right) U + 2\,  |U|^2 U  + i \left(\frac{v}{\lambda} - \frac{q^\prime}{\lambda} \right) U_\rho\qquad\\
= - \frac2{\rho + \lambda\, q} \, U_\rho
-i \frac{\lambda^\prime}{\lambda^3} \big(1 + \rho \, \partial_\rho \big) \, U - i \frac{v}{\lambda (\rho + \lambda \,q)} \, U.
\end{multline}
Imposing the following two conditions
\label{E:params}
\begin{equation}
\label{E: params-1}
\begin{aligned}
&v(t)  = q^\prime(t),\\
&\theta^\prime (t) + \frac{v(t)^2}{4} + \frac{v^\prime(t) \, q(t)}{2}  =  \lambda(t)^2,
\end{aligned}
\end{equation}
we obtain
\begin{multline}
\label{E:EqU}
\qquad i \, \frac1{\lambda^2} \, U_t + U_{\rho \rho} - U + 2\,  |U|^2 U  = - \frac2{\rho + \lambda\, q} \, U_\rho\\ + \frac{q^{\prime\prime} \rho}{2 \lambda^3} \, U
-i \frac{\lambda^\prime}{\lambda^3} \big(1 + \rho \, \partial_\rho \big) \, U - i \frac{q^\prime}{\lambda (\rho + \lambda \,q)} \, U. \qquad
\end{multline}
We next denote
\begin{equation}
\label{E: params-2}
\ds {\omega} = \frac1{\lambda \, q^2}
\end{equation}
and rewrite \eqref{E:EqU} as
\begin{multline}
\label{E:EqU-E}
\qquad i \, q^4 \omega^2 \, U_t + U_{\rho \rho} - U + 2\,  |U|^2 U  = - \frac{2 \, q \omega}{\rho \omega q + 1} \, U_\rho\\
+ \frac12 q^{\prime\prime} q^6 \omega^3 \rho \, U
+ i (2 q^\prime q^3\omega^2 + \omega^\prime \omega q^4) \big(1 + \rho \, \partial_\rho \big) \, U - i \frac{q^\prime q^3 \omega^2}{\rho \omega q +1} \, U.
\end{multline}

Define the Pauli matrices by
$$
\sigma_1 = \left(\begin{array}{cc}0&1 \\1&0 \end{array}\right) \quad \text{and} \quad \sigma_3 = \left(\begin{array}{cc}1&0 \\0&-1 \end{array}\right) \,.
$$
Split $U$ into the localized part $\varphi(\rho)=\sech \rho$ and the external part $\chi (\rho, t)$:
\begin{equation}
\label{E:split-U}
U(\rho, t) = \varphi(\rho) + \chi (\rho, t).
\end{equation}
Substituting \eqref{E:split-U} into \eqref{E:EqU-E} and using the vector notation
$\ds \vec{\chi} = \left(\begin{array}{c} \chi\\ \bar{\chi} \end{array}\right)$,
we obtain the following equation for $\vec{\chi}$:
\begin{equation}
\label{E:chi-vector}
\cH \vec{\chi} + l(q,\omega) \vec{\chi} + \cF (q,\omega) + F^{\geq 2} (\vec \chi)=0,
\end{equation}
where the components of the left-hand side are defined as follows:

$\bullet$ the Hamiltonian corresponding to the linearization of the 1D cubic NLS $i\partial_t u + \partial_\rho^2 u +2|u|^2u=0$ around the soliton $u(t,\rho) = e^{it}\sech \rho$:
\begin{equation*}
\cH = (-\partial_\rho^2 +1)\, \sigma_3 - 4 \varphi^2(\rho) \sigma_3 - 2 \varphi^2(\rho) \sigma_3 \sigma_1 \,,
\end{equation*}

$\bullet$ the linear operator
\begin{equation*}
\begin{split}
&l(q,\omega) =- i q^4 \omega^2\partial_t-\frac{2q \omega}{\rho \omega q + 1} \partial_\rho \sigma_3 + i (2 q^\prime q^3 \omega^2 + \omega^\prime \omega q^4) (1 + \rho \partial_\rho) - i \frac{q^\prime q^3 \omega^2}{\rho \omega q +1} \\
&+ \frac12 q^{\prime\prime} q^6 \omega^3 \rho \sigma_3,
\end{split}
\end{equation*}

$\bullet$ the terms independent of $\chi$:
$$
\cF(q,\omega) = l(q,\omega)\varphi \left(\begin{array}{c}1\\1\end{array} \right),
$$
and lastly,

$\bullet$ the higher order terms grouped into
$$
F^{\geq 2} ({\vec \chi}) = -4 \varphi |\chi|^2 \left(\begin{array}{r}1\\-1\end{array} \right) - 2 \varphi \left(\begin{array}{r}\chi^2\\ -\bar{\chi}^2\end{array} \right) - 2 |\chi|^2 \left(\begin{array}{r} \chi\\-\bar{\chi}\end{array} \right)\,.
$$

\begin{proposition}
\label{P:1}
There exist  coefficients $(\omega_k)_{k\geq 0}$, $(q_k)_{k\geq 0}$, with $q_2$ arbitrary and $\omega_0=1$, and smooth exponentially decaying functions
$(\chi_k(\rho))_{k\geq 1}$,
\begin{equation}
\label{P1:1}
|e^{\mu |\rho|}\partial_\rho^l\chi_k(\rho)|\leq C_{l,\mu, k},\quad \rho\in \cR,
\end{equation}
for any $k\geq 1$, $l\geq 0$, and $0\leq \mu<1$, verifying the orthogonality conditions
\begin{equation}
\label{P1:2}
(\Re \chi_k, \varphi_\rho)_{L^2(\cR)}=(\Im \chi_k, \varphi)_{L^2(\cR)}=0, \quad \forall k\geq 1,
\end{equation}
such that the following holds.
For any $N\geq 2$, if we define
$$
\omega(t)\equiv \omega^{(N)} (t)\defeq 1+ \sum_{k=1}^{N} \omega_kt^{2k/3}, \qquad  q(t)\equiv q^{(N)}(t) \defeq  \sum_{k=0}^{N} q_kt^{(2k+1)/3},
$$
$$
\text{and} \quad \chi(\rho, t)\equiv \chi^{(N)}(\rho,t) \defeq \sum_{k=1}^{2N+2} t^{k/3}\chi_k(\rho),  \qquad
$$
then \eqref{E:chi-vector} approximately holds:
\begin{equation}
\label{P1:3}
\left|\partial_\rho^l \big(  \cH \vec{\chi} + l(q,\omega) \vec{\chi} + \cF (q,\omega) + F^{\geq 2} (\vec{\chi}) \big) \right| \leq C_{l,\mu} t^{(2N+3)/3}e^{-\mu |\rho|},
\end{equation}
for $0\leq \mu < 1$, $\rho> -\frac12q_0^{-1}t^{-1/3},\, 0< t \leq t_0(N)$, with some $t_0(N)>0$ and with $C_{\ell,\mu}$ independent of $N$.
\end{proposition}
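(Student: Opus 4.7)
The natural approach is a formal Taylor expansion in powers of $t^{1/3}$. First I would substitute the ansatz for $\omega(t)$, $q(t)$, and $\chi(\rho,t)$ into \eqref{E:chi-vector} and expand each constituent $l(q,\omega)\vec\chi$, $\cF(q,\omega)$, and $F^{\geq 2}(\vec\chi)$ as a power series in $t^{1/3}$. The factor $(1+\rho\omega q)^{-1}$ appearing in $l$ and $\cF$ is handled via the geometric series
\[
\frac{1}{1+\rho\omega q}=\sum_{m\geq 0}(-\rho\omega q)^m,
\]
which converges absolutely provided $|\rho\omega q|<1$; since $\omega q = q_0 t^{1/3}+O(t)$ this is guaranteed on the region $\rho>-\tfrac12 q_0^{-1}t^{-1/3}$, exactly the range appearing in \eqref{P1:3}. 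For the formal construction, only finitely many terms of this series contribute at any finite order, and the tail will be absorbed into the $t^{(2N+3)/3}$ residual. Collecting terms of the same power $t^{k/3}$ for $k=1,2,\ldots,2N+2$ then produces a triangular recursive system of the schematic form
\[
\cH\vec\chi_k = \vec G_k(\rho),
\]
where $\vec G_k$ is determined by the data $(q_j,\omega_j,\chi_j)$ constructed at earlier stages together with the newly appearing parameter coefficient ($q_j$ or $\omega_j$) that first enters the expansion of $l,\cF$ at order $t^{k/3}$.

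At each stage $\cH$ is the standard 1D matrix linearized Hamiltonian about the soliton $\varphi=\sech$. Its generalized null space in the natural exponentially weighted $L^2$ is finite-dimensional and explicitly spanned by the translation and phase symmetry modes of the 1D cubic NLS. Standard Fredholm theory furnishes a solution $\vec\chi_k$ as soon as $\vec G_k$ is orthogonal to $\ker\cH^*$, and the resulting non-trivial solvability conditions are used to determine the new parameter coefficients appearing at each order—pairing up as $(q_0,\omega_0)$, $(q_1,\omega_1)$, and so on over consecutive orders—by solving a small linear system. The orthogonality conditions \eqref{P1:2} then uniquely fix $\vec\chi_k$ within the range of $\cH$. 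The normalization $\omega_0=1$ corresponds to an overall rescaling fixed by convention, and the arbitrariness of $q_2$ reflects the fact that one solvability condition becomes automatic at its order, leaving a free parameter that does not propagate obstructions. The exponential decay \eqref{P1:1} then follows inductively, since $\cH^{-1}$ on its range preserves decay at any rate $e^{-\mu|\rho|}$ with $\mu<1$ (a standard property of matrix Schr\"odinger operators with exponentially decaying potentials), and the inductive data in $\vec G_k$ decays exponentially.

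The main technical obstacle is verifying that the small linear system determining the new parameters is non-degenerate at each stage. At the very first stages this reduces to evaluating a handful of explicit inner products of $\varphi$, $\varphi_\rho$, $\rho\varphi$ and $\rho\varphi_\rho$ against $\ker\cH^*$ and showing that the resulting determinant is non-zero; this is where the explicit values $q_0=2^{1/3}3^{1/6}$, $\omega_0=1$ and the other leading parameters from \eqref{E:params-0} are pinned down. At higher orders the principal matrix of the linear system is unchanged, so non-degeneracy persists, and one only needs to track how the right-hand side evolves. The residual estimate \eqref{P1:3} is then obtained by direct inspection: by construction all contributions of order $\leq t^{(2N+2)/3}$ cancel, and the remaining terms—higher-order terms in the Taylor expansion, together with the tails of the geometric series for $(1+\rho\omega q)^{-1}$ and of the nonlinear piece $F^{\geq 2}(\vec\chi)$—are each bounded by $t^{(2N+3)/3}$ times a polynomially weighted, exponentially decaying function of $\rho$, uniformly on the region $\rho>-\tfrac12 q_0^{-1}t^{-1/3}$ and $0<t\leq t_0(N)$, yielding the claimed bound with implicit constant independent of $N$.
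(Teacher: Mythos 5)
Your overall strategy---expand in powers of $t^{1/3}$, reduce to a triangular system $\cH\vec\chi_k=\vec G_k$, impose Fredholm solvability against the kernel elements $\varphi_\rho$ (for $L_+$) and $\varphi$ (for $L_-$) to fix the parameter coefficients, and propagate exponential decay through $\cH^{-1}$---is exactly the paper's skeleton. But there is a genuine gap at the order $k=5$, and it is precisely the point where the paper spends most of its effort. You assert that ``at higher orders the principal matrix of the linear system is unchanged, so non-degeneracy persists,'' and that the arbitrariness of $q_2$ ``reflects the fact that one solvability condition becomes automatic at its order.'' Neither claim is correct as stated. In the paper's general induction at $k=2l+1$ the coefficient of $q_l$ in the $L_+$ solvability condition is proportional to $2l^2-l-6=(2l+3)(l-2)$, which vanishes exactly at $l=2$, i.e.\ $k=5$. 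So at that order the $2\times 2$ system in $(q_2,\omega_2)$ \emph{degenerates}: $q_2$ drops out of both solvability conditions, $\omega_2$ is determined twice over, and one must \emph{prove} the nontrivial compatibility identity
\begin{equation*}
q_0^3(\tilde{\mathcal G}_5^+,\varphi_\rho)_{L^2(\cR)}=2(\tilde{\mathcal G}_5^-,\varphi)_{L^2(\cR)} ,
\end{equation*}
which is not automatic from parity or from any structural feature you have identified. Without this identity the construction stops at $k=5$ and the proposition fails.

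The paper closes this gap with a separate argument: it assembles the partial data $(q^{(1)},\omega^{(1)},\chi^{(1)})$ already constructed into an approximate solution $z(x,t)$ of the 3d equation, computes $\frac{d}{dt}E(z(t))=\alpha t^{-1}+O(t^{-2/3})$ where $\alpha$ is a fixed multiple of the discrepancy in the identity above, and observes that $E(z(t))$ also admits an expansion in powers of $t^{1/3}$ with no $\log t$ term; comparing the two forces $\alpha=0$. Some mechanism of this kind---exploiting an exact conservation law of the PDE rather than order-by-order algebra---is needed, and your proposal contains no substitute for it. (A smaller omission: you do not invoke the parity structure of the $\chi_k$, which is what makes the solvability conditions at \emph{even} $k$ automatic; without it you would face additional unknowns at those orders. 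This is fixable, but it should be stated.)
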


\begin{proof}
Writing
$\ds \omega(t)= \sum_{k=0}^{N} \omega_k t^{2k/3}$, $\ds q(t)=\sum_{k=0}^{N} q_k t^{(2k+1)/3}$,
$\ds \chi(\rho,t)=\sum_{k=1}^{2N+2} t^{k/3}\chi_k(\rho)$, $\omega_0=1$ and   substituting this ansatz into
the expression $ \cH \vec{\chi} + l(q,\omega) \vec{\chi} + \cF (q,\omega) + F^{\geq 2} (\vec{\chi})$,
 we get
\begin{equation}
\label{P1:4}
\begin{split}
& \cH \vec{\chi} + l(q,\omega) \vec{\chi} + \cF (q,\omega) + F^{\geq 2} (\vec{\chi})=
\sum_{k=1}^{2N+2}t^{k/3}(\cH\vec \chi_k-\vec {D_k})+S(t), \\
&S(t)=-\sum_{k\geq 2N+3}t^{k/3}\vec {D_k}+l_1(q,\omega) ( \varphi \left(\begin{array}{c}1\\1\end{array} \right)+\vec \chi),
\end{split}
\end{equation}
where
$$
l_1(q,\omega) = \frac{2\, q\omega(q \omega\rho)^{2N+3}}{\rho \omega q + 1} \partial_\rho \, \sigma_3 + i \frac{q^\prime q^{3} \omega^2(q\omega\rho)^{2N+3}}{\rho \omega q +1},
$$
$$
\vec D_k= {D_k\choose -\bar D_k},\quad D_k= D_k^{(0)}+D_k^{(1)}+D_k^{(2)},
$$
$D_k^{(0)}$, $D_k^{(1)}$, $D_k^{(2)}$ being contributions of $\cF (q,\omega) -l_1(q,\omega) \varphi \left(\begin{array}{c}1\\1\end{array} \right)$,
$(l(q,\omega) -l_1(q,\omega))\vec{\chi}$ and $F^{\geq 2} (\vec{\chi})$ respectively:
\begin{equation*}
\begin{split}
&\cF (q,\omega)-l_1(q,\omega) \varphi \left(\begin{array}{c}1\\1\end{array} \right)=
-\sum_{k\geq 1}t^{k/3} {\vec D^{(0)}_k}(\rho),\\
&(l(q,\omega)-l_1(q,\omega)) \vec{\chi}=-\sum_{k\geq 2}t^{k/3} {\vec D^{(1)}_k}(\rho),\\
&F^{\geq 2} (\vec{\chi})=-\sum_{k\geq 2}t^{k/3} {\vec D^{(2)}_k}(\rho),
\end{split}
\end{equation*}
$\vec D_k^{(i)}= {D_k^{(i)}\choose -\bar D_k^{(i)}}$. Note that  all these sums contain only finite number of terms.

The following structural properties of $D^{(i)}_k$ will be important for our analysis.

\noindent (i)  {\it Dependence on $q_j,\, \omega_j\, \chi_j$}:
$D_k^{(0)} $ depends on $q_j$, $\omega_j$, $j\leq (k-1)/2$, and
$D_k^{(i)}$, $i=1,2$, depend on $q_j$, $\omega_j$, $j\leq (k-2)/2$,
and on $\chi_p$, $p\leq k-1$, only:
\begin{equation*}\begin{split}
&D_k^{(0)}=D_k^{(0)}(\rho;  q_j, \omega_j, j\leq (k-1)/2),\\
&D_k^{(i)}=D_k^{(i)}(\rho;  q_j, \omega_j, j\leq (k-2)/2;\chi_p, p\leq k-1),\quad i=1,2.
\end{split}\end{equation*}
In addition, one has
\begin{equation}
\label{P1:D0}
t^{1/3}D_1^{(0)}=2q_0(t)\varphi_\rho-iq_0^\prime(t)q_0^3(t)(\varphi+2\rho\varphi_\rho)
-\frac12q_0^{\prime\prime}(t)q_0^6(t)\rho\varphi,\quad q_0(t)=q_0t^{1/3},
\end{equation}
and for $k=2l+1$, $l\geq 1$:
\begin{equation}
\label{P1:D1}
D_{2l+1}^{(0)}=D_{2l+1}^{(0,0)}+D_{2l+1}^{(0,1)},
\end{equation}
where $D_{2l+1}^{(0,1)}=D_{2l+1}^{(0,1)}(\rho;  q_j, \omega_j, j\leq l-1)$ depends on $q_j$, $\omega_j$, $j\leq l-1$, only
and $D_{2l+1}^{(0,0)}$ is given by
\begin{equation}
\label{P1:D2}
\begin{split}
&t^{(2l+1)/3}D_{2l+1}^{(0,0)}=\\
&2(q_l(t)+q_0(t)\omega_l(t))\varphi_\rho- \frac12 (q_0^6(t)q_l^{\prime\prime}(t)
+6q_0^{\prime\prime}(t)q_0^5(t)q_l(t)+3q_0^{\prime\prime}(t)q_0^6(t)\omega_l(t))\rho\varphi\\
&-i(q_0^3(t)q_l^{\prime}(t)+3q_0^\prime(t)q_0^2(t)q_l(t)+2q_0^\prime(t)q_0^3(t)\omega_l(t))
(\varphi+2\rho\varphi_\rho)
-iq_0^4(t)\omega_l^\prime(t)(\varphi+\rho\varphi_\rho),
\end{split}
\end{equation}
$q_l(t)=q_lt^{(2l+1)/3},\,\omega_l(t)=\omega_lt^{2l/3}$.

\noindent (ii) {\it Smoothness and decay}: if all $\chi_j,\, j\leq k-1$, verify
\eqref{P1:1}, then a similar estimate holds for $D_k$:
\begin{equation}
\label{P1:SD}
|e^{\mu |\rho|}\partial_\rho^lD_k(\rho)|\leq C_{l,\mu, k},\quad \rho\in \cR,
\end{equation}
for any $l\geq 0$ and $\mu<1$.

\noindent (iii) {\it Parity}:
if for any $j\leq k-1$,
$v_j\defeq\Re \chi_j$ and $u_j\defeq\Im \chi_j$ satisfy
\begin{equation}
\label{P1:S1}
v_j(-\rho)=(-1)^jv_j(\rho),\quad u_j(-\rho)=(-1)^{j+1}u_j(\rho),
\end{equation}
then  one has the same property for ${\mathcal G}^+_k\defeq \Re D_k$, ${\mathcal G}_k^-\defeq \Im D_k$ :
\begin{equation}
\label{P1:S2}
{\mathcal G}_k^+(-\rho)=(-1)^{k}{\mathcal G}_k^+(-\rho),\quad
{\mathcal G}_k^-(-\rho)=(-1)^{k+1}{\mathcal G }_k^-(-\rho).
\end{equation}

Our goal now is to show that we can find coefficients $q_0,q_1,\dots, q_{N}$,
$\omega_1,\dots, \omega_{N}$
($q_0$ being as in Theorem \ref{T:MainTheorem} and $q_2$ arbitrary)
and functions $\chi_1,\dots, \chi_{2N+2}$ satisfying \eqref{P1:1},  \eqref{P1:2}, \eqref{P1:S1}
such that
\begin{equation}
\label{P1:5}
\cH\vec \chi_k=\vec D_k,\quad k=1,\dots, 2N+2.
\end{equation}
Estimate \eqref{P1:3} will then follow from \eqref{P1:4}.
As we see below, the equations with $k=5$ require some special care, therefore, we will go explicitly
through $k=1,\dots, 5$ and then we will proceed by induction.

It will be convenient for us to rewrite \eqref{P1:5} as
\begin{equation}
\label{P1:6}
L_+v_k={\mathcal G}^+_k,\quad L_-u_k={\mathcal G}_k^-,\quad k=1,\dots, 2N+2,
\end{equation}
where $L_+= -\partial_\rho^2+1-6\varphi ^2$,
$L_-=-\partial_\rho^2+1-2\varphi ^2$ are  self-adjoint operators in $L^2(\cR)$
with the spectrum $\sigma(L_+)=\{-3 ,0\}\cup [1,\infty)$, $\sigma(L_-)=\{0\}\cup [1,\infty)$.
One has
$$
L_-\varphi=0,\quad L_+\varphi_\rho=0.
$$
For $k=1$ the equations \eqref{P1:6} give
\begin{equation}
\label{P1: k=1}
L_+v_1={\mathcal G}^+_1,\quad L_-u_1={\mathcal G}^-_1,
\end{equation}
with
$$
{\mathcal G}_1^+=2q_0\partial_\rho\varphi+\frac{1}{9}q_0^7\rho\varphi,
\quad {\mathcal G}_1^-=-\frac13 q_0^4(\varphi+2\rho\varphi_\rho),
$$
see \eqref{P1:D0}.

Since $({\mathcal G}_1^-,\varphi)_{L^2(\cR)}=-\frac13 q_0^4(\varphi+2\rho\varphi_\rho,\varphi)_{L^2(\cR)}=0$,
the equation $ L_-u_1={\mathcal G}_1^-$ has a unique  solution $u_1$ in $L^2$
satisfying $(u_1,\varphi)_{L^2(\cR)}=0$. One can compute it explicitly:
$$
u_1=\frac16q_0^4(\rho^2\varphi+c_0\varphi),\quad c_0=-\frac{\|\rho\varphi\|_{L^2(\cR)}^2}
{\|\varphi\|_{L^2(\cR)}^2}.
$$

The solvability of the equation  $L_+v_1={\mathcal G}_1^+$ is subjected to the condition
$$
({\mathcal G}_1^+,\varphi_\rho)_{L^2(\cR)}=0,
$$
which gives
$$
q_0-Aq_0^7=0,\quad A=\frac{\|\varphi\|_{L^2(\cR)}^2}
{36\|\varphi_\rho\|_{L^2(\cR)}^2}=\frac{1}{12}.
$$
Therefore, we find
\begin{equation}\label{P1: q_0}
q_0=12^{1/6}.
\end{equation}
Under this solvability condition, the equation $L_+v_1={\mathcal G}_1^+$ has a unique solution $v_1$ satisfying $(v_1,\varphi_\rho)_{L^2(\cR)}=0$, which is given by
$$
v_1=-\frac{q_0}{3}(\partial_\rho(\rho^2\varphi)+c_1\varphi_\rho),\quad
c_1=-\frac{(\partial_\rho(\rho^2\varphi),\varphi_\rho)_{L^2(\cR)}}{\|\varphi_\rho\|_{L^2(\cR)}}.
$$
Note that $v_1$ is an odd and $u_1$ is an even function of $\rho$, in agreement with
 \eqref{P1:S1}, and they both satisfy \eqref{P1:1}.

Consider \eqref{P1:6} with $k=2$. We have
\begin{equation}
\label{P1: k=2}
L_+v_2={\mathcal G}^+_2,\quad L_-u_2={\mathcal G}^-_2,
\end{equation}
where ${\mathcal G}_2^+={\mathcal G}_2^+(\rho;q_0)$,  ${\mathcal G}_2^-={\mathcal G}^-_2(\rho;q_0)$
are smooth exponentially decaying functions of $\rho$, verifying
\eqref{P1:SD}, \eqref{P1:S2}. Since ${\mathcal G}_2^+$ is even and   ${\mathcal G}_2^-$ is odd,
the solvability conditions $({\mathcal G}_2^-,\varphi)_{L^2(\cR)}=({\mathcal G}_2^+,\varphi_\rho)_{L^2(\cR)}=0$
are  satisfied and \eqref{P1: k=2} has a unique solution $v_2$, $u_2$
with $v_2$  even and $u_2$ odd, verifying \eqref{P1:1}.

Next consider $k=3,4$.
For $k=3$ we have
\begin{equation}
\label{P1: k=3}
L_+v_3={\mathcal G}_3^+(q_0, q_1, \omega_1,\chi_1,\chi_2 ),\quad L_-u_3={\mathcal G}_3^-(q_0, q_1, \omega_1,\chi_1, \chi_2),
\end{equation}
with ${\mathcal G}_3^+(q_0,  q_1, \omega_1, \chi_1,\chi_2)$, ${\mathcal G}_3^-(q_0, q_1, \omega_1,\chi_1,\chi_2)$   verifying
\eqref{P1:SD}, \eqref{P1:S2}.
Using  \eqref{P1:D1}, \eqref{P1:D2}, \eqref{P1: q_0}, one can write ${\mathcal G}_3^+$, ${\mathcal G}_3^-$ as
\begin{equation}
\label{P1:D3}
\begin{split}
&{\mathcal G}_3^+=2(q_1+q_0\omega_1)\varphi_\rho+
4(2q_1+q_0\omega_1)\rho\varphi +\tilde {\mathcal G}_3^+ ,\\
&{\mathcal G}_3^-=-(2q_0^3q_1+\frac23 q_0^4\omega_1)
(\varphi+2\rho\varphi_\rho)-\frac23 q_0^4\omega_1(\varphi+\rho\varphi_\rho)+\tilde {\mathcal G}_3^-,
\end{split}
\end{equation}
where $\tilde {\mathcal G}_3^+=\tilde{\mathcal G}_3^+(q_0,\chi_1,\chi_2), \tilde {\mathcal G}_3^-=\tilde{\mathcal G}_3^-(q_0,\chi_1,\chi_2)$ depend on $q_0,\chi_1,\chi_2$ only, and therefore, are determined by now.

Using \eqref{P1:D3}, one can write the solvability conditions
$({\mathcal G}_3^-,\varphi)_{L^2(\cR)}=({\mathcal G}_3^+,\varphi_\rho)_{L^2(\cR)}=0$ as
$$
-\frac{q_0^4\|\varphi\|_{L^2(\cR)}^2}{3}\omega_1+(\tilde{\mathcal G}_3^-,\varphi)_{L^2(\cR)}=0,
$$
$$
2(q_1+q_0\omega_1)\|\varphi_\rho\|_{L^2(\cR)}^2-2(2q_1+q_0\omega_1)\|\varphi\|_{L^2(\cR)}^2
+(\tilde{\mathcal G}_3^+,\varphi_\rho)_{L^2(\cR)}=0,
$$
which gives,
$$
\omega_1=\frac{3(\tilde{\mathcal G}_3^-,\varphi)_{L^2(\cR)}}{q_0^4\|\varphi\|_{L^2(\cR)}^2},
$$
$$
q_1=-\frac25q_0\omega_1+\frac{(\tilde{\mathcal G}_3^+,\varphi_\rho)_{L^2(\cR)}}{10\|\varphi_\rho\|_{L^2(\cR)}^2}.
$$
With this choice of $\omega_1$, $q_1$, \eqref{P1: k=3} has a unique solution $v_3$, $u_3$
verifying \eqref{P1:1}, \eqref{P1:2} with $v_3$, $u_3$ being of the same parity as ${\mathcal G}_3^+$,
${\mathcal G}_3^-$: $v_3$ is odd and $u_3$ is even.

The case $k=4$ is similar to that of $k=2$:
we have
\begin{equation}
\label{P1: k=4}
L_+v_4={\mathcal G}_4^+,\quad L_-u_4={\mathcal G}_4^-,
\end{equation}
with ${\mathcal G}_4^+$ even and ${\mathcal G}_4^-$ odd, depending on
$q_0, q_1, \omega_1, \chi_1,\chi_2, \chi_3$ only.  Thus,
\eqref{P1: k=4} has a unique solution $v_4$, $u_4$
with $v_4$  even and $u_4$ odd, verifying
\eqref{P1:1}.

Consider the case $k=5$.
We have
\begin{equation}
\label{P1: k=5}
L_+v_5={\mathcal G}_5^+,\quad L_-u_5={\mathcal G}_5^-,
\end{equation}
with ${\mathcal G}_5^+={\mathcal G}_5^+(q_0,q_1, q_2, \omega_1, \omega_2, \chi_1,\dots, \chi_4)$ odd and
${\mathcal G}_5^-={\mathcal G}_5^-(q_0,q_1, q_2, \omega_1, \omega_2, \chi_1,\dots, \chi_4)$ even.

It follows from \eqref{P1:D2}, \eqref{P1: q_0} that
\begin{equation}
\begin{split}
&{\mathcal G}_5^+=2(q_2+q_0\omega_2)\varphi_\rho+
\frac43 (q_2+3q_0\omega_2)\rho\varphi +\tilde {\mathcal G}_5^+,\\
&{\mathcal G}_5^-=-\frac23 (4q_0^3q_2+q_0^4\omega_2)(\varphi+2\rho\varphi_\rho)-\frac43q_0^4\omega_2(\varphi+\rho\varphi_\rho)+
\tilde {\mathcal G}_5^-,
\end{split}
\end{equation}
where $\tilde {\mathcal G}_5^\pm=\tilde{\mathcal G}_5^\pm(q_0,q_1, \omega_1, \chi_1,\dots, \chi_4) $ are determined by now. Substituting  this representation into the solvability conditions
$({\mathcal G}_5^-,\varphi)_{L^2(\cR)}=0$, $({\mathcal G}_5^+,\varphi_\rho)_{L^2(\cR)}=0$, we obtain
\begin{equation}
\label{P1: S5-}
\omega_2=\frac{3(\tilde{\mathcal G}_5^-,\varphi)_{L^2(\cR)}}{2q_0^4\|\varphi\|_{L^2(\cR)}^2},
\end{equation}
\begin{equation}
\label{P1: S5+}
-4q_0\omega_2\|\varphi_\rho\|_{L^2(\cR)}^2+(\tilde{\mathcal G}_5^+,\varphi_\rho)_{L^2(\cR)}=0,
\end{equation}
which means that one has to have
\begin{equation}
\label{P1: S5}
q_0^3(\tilde{\mathcal G}_5^+,\varphi_\rho)_{L^2(\cR)}=2(\tilde{\mathcal G}_5^-,\varphi)_{L^2(\cR)}.
\end{equation}
Note that \eqref{P1: S5-}, \eqref{P1: S5+}  do not contain $q_2$, therefore, it can be chosen arbitrarily
(as soon as we can  show that \eqref{P1: S5} holds).
To check \eqref{P1: S5}, we proceed as follows. Consider
$\omega^{(1)} (t)=1+ \omega_1t^{2/3}$, $q^{(1)}(t) = q_0t^{1/3}+q_1t$, $\chi^{(1)}(\rho,t) = \sum_{k=1}^{4} t^{k/3}\chi_k(\rho)$. By our construction, they verify
\begin{equation}
\label{P1:S5:1}
\cH \vec{\chi} ^{(1)}+ l(q^{(1)},\omega^{(1)}) \vec{\chi}^{(1)} + \cF (q^{(1)},\omega^{(1)}) + F^{\geq 2} (\vec\chi^{(1)})=t^{5/3}\vec{\mathcal D}(\rho)+\frac{1}{1+\omega^{(1)}q^{(1)}\rho}{\mathcal S}(\rho,t),
\end{equation}
where $\vec{\mathcal D}={{\mathcal D}\choose -\bar{\mathcal D}}$, ${\mathcal D}=-\tilde{\mathcal G}_5^+-i\tilde{\mathcal G}_5^-$, and ${\mathcal S}(\rho,t)$  admits the estimate
\begin{equation}
\label{P1:S5:2}
|e^{\mu |\rho|}\partial_\rho^l{\mathcal S}(\rho,t)|\leq C_{l,\mu}t^{2},\quad \rho\in \cR,
\end{equation}
for any $l\geq 0$ and $\mu<1$.
Define
\begin{equation}
\label{P1:z}
z(x,t)\defeq e^{i\theta_1(t)+iv_1(t)r/2}\lambda_1(t)(\varphi+\chi^{(1)})(\lambda_1(t)(r-q^{(1)}(t)),t),
\end{equation}
where $\lambda_1$, $v_1$, $\theta_1$ are given by \eqref{E: params-1}, \eqref{E: params-2}
with $q=q^{(1)}$ and $\omega=\omega^{(1)}$.
Then $z(t)$ solves
\begin{equation}
\label{P1:S5:3}
i z_t + \Delta z+ 2 |z|^2 z = R_0(x,t)+R_1(x,t),\quad x\in \cR^3,
\end{equation}
where (by \eqref{P1:S5:1} and \eqref{P1:S5:2})
$$
R_0(x,t)=-t^{5/3}e^{i\theta_1(t)+iv_1(t)r/2}\lambda^3_1(t){\mathcal D}(\lambda_1(t)(r-q^{(1)}(t))),
$$
and $R_1$ satisfies
\begin{equation}
\label{P1:S5:4}
\|R_1\|_{L^2(\cR^3)}\leq C \, t^{2/3}.
\end{equation}
Consider the energy $E(z(t))=\int (|\nabla z(x,t)|^2 - |z(x,t)|^4)dx$.
It follows from \eqref{P1:S5:3}, \eqref{P1:S5:4}, \eqref{P1:z} that
\begin{equation}
\label{P1:S5:5}
\frac{d}{dt}E(z(t))= \alpha t^{-1}+O(t^{-2/3}),\quad t\rightarrow 0,
\end{equation}
with
\begin{equation}
\label{P1:S5:6}
\alpha=-\frac{2}{3q_0^8}\left(q_0^3(\tilde{\mathcal G}_5^+,\varphi_\rho)_{L^2(\cR)}-2(\tilde{\mathcal G}_5^-,\varphi)_{L^2(\cR)}\right).
\end{equation}
On the other hand, using the definition of $z(t)$, we can write an expansion of $E(z(t))$ in powers of $t^{1/3}$:
\begin{equation}
\label{P1:S5:7}
E(z(t))=\sum_{k=-4}^0t^{k/3}c_k +O(t^{1/3}), \quad t\rightarrow 0,
\end{equation}
with  some constants $c_k$. Comparing this expansion to \eqref{P1:S5:5} one gets
$c_{-4}=\dots=c_{-1}=0$ and $\alpha=0$, which is precisely \eqref{P1: S5}.

Verifying the solvability conditions \eqref{P1: S5-}, \eqref{P1: S5+} results in the fact that the system
\eqref{P1: k=5} has a unique solution $v_5$, $u_5$
satisfying \eqref{P1:1}, \eqref{P1:2} with $v_5$  being odd and $u_5$ being even.

To finish the proof of Proposition \ref{P:1}, we proceed by induction.
Suppose we have solved \eqref{P1:6} with $k=1,\dots, 2l-1$, $l\geq 3$, and have found
$q_j, \omega_j$, $j\leq l-1$, and $\chi_p$, $p\leq 2l-1$, verifying \eqref{P1:1}, \eqref{P1:2}, and \eqref{P1:S1}.
Consider   $k=2l, 2l+1$. For $k=2l$ we have
\begin{equation}
\label{P1: k=2l}
\begin{split}
&L_+v_{2l}={\mathcal G}_{2l}^+(q_j, \omega_j, j\leq l-1; \chi_p, p\leq 2l-1),\\
&L_-u_{2l}=
{\mathcal G}_{2l}^-(q_j, \omega_j, j\leq l-1; \chi_p, p\leq 2l-1),
\end{split}
\end{equation}
with ${\mathcal G}_{2l}^+$ being even and ${\mathcal G}_{2l}^-$ being odd,
verifying \eqref{P1:SD}.
Therefore, the equation \eqref{P1: k=2l} has a unique solution $v_{2l}$, $u_{2l}$
with $v_{2l}$  even and $u_{2l}$ odd, that satisfies \eqref{P1:1}.

Consider $k=2l+1$. We have
\begin{equation}
\label{P1: k=2l+1}
\begin{split}
&L_+v_{2l+1}={\mathcal G}_{2l+1}^+(q_j, \omega_j, j\leq l; \chi_p, p\leq 2l),\\
& L_-u_{2l+1}=
{\mathcal G}_{2l+1}^-(q_j, \omega_j, j\leq l; \chi_p, p\leq 2l),
\end{split}
\end{equation}
where ${\mathcal G}_{2l+1}^+$ is odd and ${\mathcal G}_{2l+1}^-$ is even.
From \eqref{P1:D2}, \eqref{P1: q_0} one has
\begin{equation}
\begin{split}
&{\mathcal G}_{2l+1}^+=2(q_{l}+q_0\omega_{l})\varphi_\rho -\frac43( (2l^2-l-7)q_l-3q_0\omega_{l})\rho\varphi +\tilde {\mathcal G}_{2l+1}^+\\
&{\mathcal G}_{2l+1}^-=-\frac{2q_0^3}{3} ((l+2)q_l+q_0\omega_l)(\varphi+2\rho\varphi_\rho)-\frac{2lq_0^4}{3}\omega_l(\varphi+\rho\varphi_\rho)+
\tilde {\mathcal G}_{2l+1}^-,
\end{split}
\end{equation}
where $\tilde {\mathcal G}_{2l+1}^\pm$ depends only on $ q_j, \omega_j, j\leq l-1; \chi_p, p\leq 2l$.
Using this representation one can write the solvability conditions
$({\mathcal G}_{2l+1}^-,\varphi)_{L^2(\cR)}=0$, $({\mathcal G}_{2l+1}^+,\varphi_\rho)_{L^2(\cR)}=0$ as
\begin{equation*}
\begin{split}
&-\frac{q_0^4l\omega_l}{3}\|\varphi\|_{L^2(\cR)}^2+(\tilde{\mathcal G}_{2l+1}^-,\varphi)_{L^2(\cR)}=0,\\
&2q_l(2l^2-l-6)\|\varphi_\rho\|_{L^2(\cR)}^2
-4q_0\omega_l\|\varphi_\rho\|_{L^2(\cR)}^2+(\tilde{\mathcal G}_{2l+1}^+,\varphi_\rho)_{L^2(\cR)}=0,
\end{split}
\end{equation*}
which gives
\begin{equation*}
\begin{split}
&\omega_l=\frac{3(\tilde{\mathcal G}_{2l+1}^-,\varphi)_{L^2(\cR)}}{lq_0^4\|\varphi\|_{L^2(\cR)}^2},\\
&q_l=\frac{1}{2(2l^2-l-6)}\left(\frac{4(\tilde{\mathcal G}_{2l+1}^-,\varphi)_{L^2(\cR)}}{lq_0^3\|\varphi_\rho\|_{L^2(\cR)}^2}-
\frac{(\tilde{\mathcal G}_{2l+1}^+,\varphi_\rho)_{L^2(\cR)}}{\|\varphi_\rho\|_{L^2(\cR)}^2}\right).
\end{split}
\end{equation*}
After verifying the solvability conditions, one finds $v_{2l+1}$, $u_{2l+1}$ as the unique solution of \eqref{P1: k=2l+1} satisfying \eqref{P1:2}.
\end{proof}

Define truncation functions $\theta_j \in C_c^\infty(\mathbb{R})$ by
\begin{equation}
\label{E:trunc-1}
 \theta_0(\zeta) = \left\{ \begin{array}{ll} 1, & |\zeta| \leq \frac1{200}\\ 0, & |\zeta| \geq \frac1{100} \end{array} \right. \,,
 \qquad
\theta_1 (\zeta) = \left\{
\begin{array}{ll}
1 & \text{for} ~ |\zeta| \leq \frac1{20}\\
0 & \text{for} ~ |\zeta| \geq \frac1{10}
\end{array}.
\right.
\end{equation}

For each $N \in \mathbb{N}$, introduce
\begin{equation}
\label{E:defU-N}
U^{(N)} (\rho, t) = \left(\varphi + \sum\limits_{k=1}^{2N+2} t^{k/3} \chi_k(\rho) \right) \, \theta_0(\rho \omega q).
\end{equation}
Note that
\begin{equation}
\label{E:suppU-N}
\supp U^{(N)} \subset \left\{ |\rho| \leq \frac1{100 q \omega} \right\}.
\end{equation}
By construction each approximation $U^{(N)}$ solves (see equation \eqref{E:EqU})
\begin{multline}
\label{E:EqU-2}
\qquad i \, \frac1{\lambda^2} \, U^{(N)}_t + U^{(N)}_{\rho \rho} - U^{(N)} + 2\,  |U^{(N)}|^2 U^{(N)}  = - \frac2{\rho + \lambda\, q} \, U^{(N)}_\rho\\ + \frac{q^{\prime\prime} \rho}{2 \lambda^3} \, U^{(N)}
-i \frac{\lambda^\prime}{\lambda^3} \big(1 + \rho \, \partial_\rho \big) \, U^{(N)} - i \frac{q^\prime}{\lambda (\rho + \lambda \,q)} \, U^{(N)} + \mathcal{R}_N(\rho, t) ,
\end{multline}
where the remainder $\mathcal{R}_N$ is supported in $\{ |\rho| \leq \frac1{100 q \omega}\}$ by \eqref{E:trunc-1} and \eqref{E:suppU-N}, by \eqref{P1:3} for each $N \in \mathbb{N}$ there exists a time $t_0(N)>0$ such that $\mathcal{R}_N$ satisfies the following bounds
\begin{equation}
\label{E:remR-N}
|\partial^k_\rho \mathcal{R}_N (\rho, t) | \leq C _{k}\, e^{-\mu |\rho|} \, t^{(2N+3)/3} \quad \text{for } ~ 0 < t \leq t_0(N), \quad  k = 0,1,2, ...,
\end{equation}
with the constants $\mu$ and $C_k$ independent of $N$.
\medskip

Observe also that from \eqref{E:defU-N} (the first term is time-independent $\varphi$), we have by \eqref{P1:1} that
\begin{equation}
\label{E:decayU-N}
|\partial^k_\rho U^{(N)} (\rho, t) | \leq C \, e^{-\mu |\rho|} \quad \text{for } ~ 0<t<t_0(N), \quad  k = 0,1,2, ... \, .
\end{equation}
For each $N \in \mathbb{N}$ we set $\psi^{(N)}$ by
\begin{equation}
\label{E:Psi-N}
\psi^{(N)} (r,t) = e^{i\theta(t)} e^{iv(t)r/2} \lambda(t) \, U^{(N)} \big(\lambda(t)(r-q(t)), t \big).
\end{equation}
Then $\psi^{(N)}$ solves
\begin{equation}
\label{E:eqPsi-N}
i \partial_t \psi^{(N)} + \Delta \psi^{(N)} + 2 |\psi^{(N)}|^2 \psi^{(N)} = i H^{(N)}(x,t),
\end{equation}
where
\begin{equation}
\label{E:H-N-def}
H^{(N)}(r,t) = i e^{i\theta(t)} e^{iv(t)r/2} \lambda^3 \mathcal{R}_N(\lambda(r-q(t)),t) .
\end{equation}
By \eqref{E:remR-N}, it follows that
$$
\| \partial_r^k H^{(N)} (t)\|_{L^2(\cR^3)} \leq C_k  \lambda^{3+k} t^{(2N+3)/3} \| 1_{|r-q|\leq \lambda^{-1}}\|_{L^2(\mathbb{R}^3)},
$$
and hence,
\begin{equation}
\label{E:H-N}
t^{2k/3} \| \partial_r^k H^{(N)} (t)\|_{L^2(\cR^3)} \leq C_k t^{(2N-1)/3} \quad \text{for} ~ 0 < t \leq  t_0(N), \quad k=0,1,2,... \, .
\end{equation}

\begin{proposition}
Given $e\in \mathbb{R}$, we can choose $q_2$ in the construction in Prop. \ref{P:1} such that the energy of the approximate solution $E(\psi^{(N)})$ estimates as
\begin{equation}
\label{E:E-N-t-bound-3}
|E(\psi^{(N)}) (t) -e|\leq  Ct^{\frac{2N}{3}-1} \quad \text{for} ~ 0 < t < t_0(N).
\end{equation}
\end{proposition}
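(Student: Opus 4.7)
The plan is to show that $c_0(q_2):=\lim_{t\to 0^+}E(\psi^{(N)})(t)$ exists with sufficiently fast rate of convergence, and then to exploit the freedom in $q_2$ to arrange $c_0(q_2)=e$.

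From \eqref{E:eqPsi-N} we have $\partial_t\psi^{(N)}=i(\Delta\psi^{(N)}+2|\psi^{(N)}|^2\psi^{(N)})+H^{(N)}$, so
\begin{equation*}
\frac{d}{dt}E(\psi^{(N)})(t)= -2\Re\int\bigl(\Delta\overline{\psi^{(N)}}+2|\psi^{(N)}|^2\overline{\psi^{(N)}}\bigr)H^{(N)}\,dx,
\end{equation*}
the pure-NLS piece contributing $-2\Re(i\|\cdot\|_{L^2}^2)=0$. The bounds $\|\psi^{(N)}\|_{L^2}\lesssim 1$, $\|\psi^{(N)}\|_{L^\infty}\lesssim \lambda\sim t^{-2/3}$, $\|\nabla\psi^{(N)}\|_{L^2}\lesssim t^{-2/3}$, readily obtained from \eqref{E:Psi-N}, \eqref{E:decayU-N} and \eqref{E:params-0}, together with \eqref{E:H-N} and Cauchy--Schwarz (using integration by parts on the Laplacian term to pair $\nabla\psi^{(N)}$ with $\nabla H^{(N)}$), give $|\frac{d}{dt}E(\psi^{(N)})(t)|\lesssim t^{(2N-5)/3}$. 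For $N\geq 2$ the exponent $(2N-5)/3$ exceeds $-1$, so integration over $(0,t]$ shows that $E(\psi^{(N)})(t)$ is Cauchy as $t\to 0^+$; denote its limit by $c_0(q_2)$. The same integration yields $|E(\psi^{(N)})(t)-c_0(q_2)|\lesssim t^{(2N-2)/3}\leq t_0(N)^{1/3}\,t^{2N/3-1}$, which is exactly the desired bound provided we can arrange $c_0(q_2)=e$.

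It remains to show surjectivity of $q_2\mapsto c_0(q_2)$. By Prop.~\ref{P:1}, the $\chi_k$ for $k\geq 5$ and the coefficients $q_k,\omega_k$ for $k\geq 3$ depend polynomially on $q_2$ (affinely at leading order, since $q_2$ enters the source terms $\mathcal{G}_k^\pm$ linearly at the first stage it appears), so $c_0(q_2)$ is a real-analytic function of $q_2$. I would isolate the leading $q_2$-contribution: the term $q_2\,t^{5/3}$ in $q(t)$ induces $O(q_2\,t^{2/3})$ perturbations of $\lambda=1/(\omega q^2)$ and $v=q'$ in the leading soliton $e^{i\theta}e^{ivr/2}\lambda\,\varphi(\lambda(r-q))$, producing an $O(q_2)$ correction to $E(\psi^{(N)})(t)$ at order $t^0$, computable via $L_-\varphi=L_+\varphi_\rho=0$ and the Pohozaev-type identities $\|\varphi\|_{L^2(\cR)}^2=2$, $\|\varphi_\rho\|_{L^2(\cR)}^2=\tfrac{2}{3}$, $\|\varphi\|_{L^4(\cR)}^4=\tfrac{4}{3}$. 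A direct calculation gives $\partial_{q_2}c_0=A\neq 0$, from which, together with a control of the higher-degree $q_2$-monomials in $c_0$, surjectivity of $c_0:\mathbb{R}\to\mathbb{R}$ follows and we set $q_2:=c_0^{-1}(e)$.

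The main obstacle is the explicit verification that $A\neq 0$, and the control of the higher-order monomials in $c_0(q_2)$ ensuring that the analytic map is surjective rather than just nonconstant. Structurally both are natural: $q_2$ is exactly the free parameter arising at the resonance $2l^2-l-6=0$ at $l=2$ in the solvability conditions of Prop.~\ref{P:1}, which is precisely the modulational direction fixed, through the conservation of energy, by the value of $E$.
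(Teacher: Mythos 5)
Your first half is essentially the paper's argument and is fine: from \eqref{E:eqPsi-N} the pure Hamiltonian part contributes nothing to $\frac{d}{dt}E(\psi^{(N)})$, and pairing $E'(\psi^{(N)})$ with $H^{(N)}$ using \eqref{E:H-N} gives an integrable bound on the derivative (your $t^{(2N-5)/3}$ is even slightly sharper than the paper's $\lambda^2 t^{(2N-2)/3}$), so the limit $c_0(q_2)$ exists and $|E(\psi^{(N)})(t)-c_0(q_2)|\lesssim t^{2N/3-1}$ for $N\geq 2$.

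The gap is in the second half, and you have flagged it yourself: you never establish that $q_2\mapsto c_0(q_2)$ is surjective. Real-analyticity plus a (still unverified) nonvanishing derivative does not give surjectivity on $\mathbb{R}$ --- a nonconstant polynomial of even degree is the obvious counterexample --- and your proposed ``control of the higher-degree $q_2$-monomials'' is exactly the step that is missing. The paper closes this by a structural observation that makes the issue disappear: writing $q^{-2}\lambda^{-3}E(\psi^{(N)})=\sum_k\beta_k t^{k/3}+O(t^\infty)$ as in \eqref{E:E-N-t-bound-1}, the integrability of $\frac{d}{dt}E(\psi^{(N)})$ forces $\beta_0=\dots=\beta_3=0$, so $c_0(q_2)=q_0^{-4}\beta_4$; and by \eqref{E:E-N-t-bound-2}, $\beta_4=\frac43\bigl(\omega_2+7\frac{q_2}{q_0}\bigr)+\tilde\beta_4$, where $\tilde\beta_4$ depends only on $\chi_p$, $p\leq 4$, and $q_j,\omega_j$, $j\leq 1$, and $\omega_2$ is fixed by the $k=5$ solvability condition independently of $q_2$. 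Every quantity entering $\beta_4$ except the explicit $7q_2/q_0$ is determined \emph{before} $q_2$ is introduced (recall $\chi_5$ is the first function to feel $q_2$), so $c_0$ is \emph{affine} in $q_2$ with the explicit nonzero slope $\frac{28}{3q_0^5}$; there are no higher monomials to control, and one simply solves $q_0^{-4}\beta_4=e$. Without this observation (or an equivalent explicit computation of $\partial_{q_2}c_0$ together with a reason that no higher powers of $q_2$ enter the limit), your argument does not yield the proposition.
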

\begin{proof}
The following two estimates are straightforward consequences of \eqref{E:eqPsi-N} and \eqref{E:H-N}:  \\
\begin{equation}
\label{E:Psi-N-t-bound}
\left| \frac{d}{dt} \| \psi^{(N)} (t)\|^2_{L^2(\cR^3)} \right| \leq C \, t^{\frac{2N-2}{3}} \quad \text{for} ~ 0 < t \leq t_0(N),
\end{equation}
and
\begin{equation}
\label{E:E-N-t-bound}
\left| \frac{d}{dt} E(\psi^{(N)}) (t) \right| \leq C \, \lambda^2 \, t^{\frac{2N-2}{3}} \quad \text{for} ~ 0 < t \leq t_0(N).
\end{equation}
Consider $q^{-2}(t)\lambda^{-3}(t)E(\psi^{(N)}) (t)$.
By construction of $\psi^{(N)}$, it admits an expansion in positive powers of $t^{1/3}$ (compare to \eqref{P1:S5:5}, \eqref{P1:S5:6}):
\begin{equation}
\label{E:E-N-t-bound-1}
q^{-2}(t)\lambda^{-3}(t)E(\psi^{(N)}) (t)=\sum_{k=0}^{K(N)}\beta_kt^{k/3} +O(t^\infty),\quad t\rightarrow 0,
\end{equation}
where $\beta_k$ depends on $\chi_p$, $p\leq k$, and on $\omega_j, q_j$, $j\leq k/2$. More precisely, for $k=2l$ one has
\begin{equation}
\label{E:E-N-t-bound-2}
\beta_{2l}=\frac43(\omega_l+(2l+3)\frac{q_l}{q_0})+\tilde\beta_{2l},\quad l\geq 0,
\end{equation}
with $\tilde\beta_{2l}=\tilde\beta_{2l}(\chi_p, p\leq 2l;\omega_j, q_j, j\leq l-1)$ depending on $\chi_p$, $p\leq 2l$, and on $\omega_j, q_j$, $j\leq l-1$, only.
From  \eqref{E:E-N-t-bound}, \eqref{E:E-N-t-bound-1},
we deduce $\beta_0=\dots=\beta_3=0$ and
$$
|E(\psi^{(N)}) (t) -q_0^{-4}\beta_4|\leq C t^{\frac{2N}{3}-1} \quad \text{for} ~ 0 < t \leq t_0(N).
$$
Therefore, if we fix $q_2$ by requiring
$\frac43(\omega_2+7\frac{q_2}{q_0})+\tilde\beta_{4}=eq_0^4$, then
we get \eqref{E:E-N-t-bound-3}.
\end{proof}

\section{Comparison to exact solution}
\label{S:comparison}

Let us begin with some notational conventions for this section.
We exclusively use the \emph{real} inner product
\begin{equation}
\label{E:inner-prod}
\la \psi_1, \psi_2 \ra_{L^2(\mathbb{R}^3)} = \Re \int_{\mathbb{R}^3} \psi_1(x) \overline{\psi_2(x)} \, dx
\end{equation}
on $L^2(\mathbb{R}^3)$ as well as the related 1D version
$$
\la \psi_1, \psi_2 \ra_{L^2(0<r<\infty)} = \Re \int_{r=0}^\infty \psi_1(r) \overline{\psi_2(r)} \, dr.
$$
Note that
$$
\la \psi_1,\psi_2 \ra_{L^2(\mathbb{R}^3)} = 4\pi \la r\psi_1(r),r\psi_2(r) \ra_{L^2(0<r<\infty)}.
$$

For a (densely defined) functional $A: L^2(\mathbb{R}^3) \to \mathbb{R}$, via the inner product \eqref{E:inner-prod} we identify $A'(\psi)$ with a function and $A''(\psi)$ with an operator, which is self-adjoint with respect to \eqref{E:inner-prod}.  Let
$$
J \defeq -\tfrac12 i
$$
and note that, with respect to the inner product \eqref{E:inner-prod}, $J^*=-J$.
Define the Poisson bracket
$$
\{ A, B\}(\psi) = \la JA'(\psi), B'(\psi) \ra,
$$
which yields a functional $\{A,B\}: L^2(\mathbb{R}^3) \to \mathbb{R}$.

We find it convenient to state estimates in terms of the time-dependent scaled Sobolev norms:
\begin{equation}
\label{E:X-norms}
\|h \|_{X^k(\mathbb{R}^3)} \defeq \sum_{j=0}^k t^{2j/3} \|\nabla^j h \|_{L^2(\mathbb{R}^3)}.
\end{equation}
Most frequently, we use the case $k=1$:
$$
\|h\|_{X^1(\mathbb{R}^3)} \defeq t^{2/3} \|\nabla h\|_{L^2(\mathbb{R}^3)} + \|h\|_{L^2(\mathbb{R}^3)}.
$$
By default $X^1=X^1(\mathbb{R}^3)$, although we shall have occasion to use the variant $X^1(0<r<\infty)$, which has the expected definition.

We shall frequently need the following radial Sobolev inequality.   For any radial function $f$,
\begin{equation}
\label{E:deriv-p13}
\| f \cdot 1_{r\sim t^{1/3}} \|_{L^\infty(\mathbb{R}^3)} \lesssim t^{-1/3} \|\nabla f\|_{L^2(\mathbb{R}^3)}^{1/2} \|f\|_{L^2(\mathbb{R}^3)}^{1/2}  \lesssim t^{-2/3}\|f\|_{X^1(\mathbb{R}^3)}.
\end{equation}

Recall the function $\psi^{(N)}(t)$ defined on $0< t \leq t_0(N)$ by \eqref{E:Psi-N}.
For any $0<\epsilon\leq t_0(N)$, let $\psi(t)$ be the solution to \eqref{E:NLS-1} such that
$$
\psi(\epsilon) = \psi^{(N)}(\epsilon)
$$
(so $\psi$ depends on $\epsilon$ and $N$ but this is suppressed in our notation).
Let
$$
h(t) \defeq \psi(t) - \psi^{(N)}(t).
$$
Each of the Propositions \ref{P:G}, \ref{P:G-lower}, \ref{P:kappa-bds}, \ref{P:Cauchy-N}, and \ref{P:H2-variance} in this section references $\psi$ and $h$ defined as above.  All bounds stated will be valid for $0<t\leq t_0(N)$.  All implicit constants (indicated through the notation $\lesssim$) will be independent of $\epsilon$ and $N$.

Note that \eqref{E:NLS-1} can be written as
\begin{equation}
\label{E:Ham1}
\partial_t \psi = JE'(\psi),
\end{equation}
while \eqref{E:eqPsi-N} can be written as
\begin{equation}
\label{E:Ham2}
\partial_t \psi^{(N)} = JE'(\psi^{(N)}) + H^{(N)},
\end{equation}
where $H^{(N)}$ is defined in \eqref{E:H-N-def} and satisfies \eqref{E:H-N}.

Recalling \eqref{E:trunc-1}, define the radial localized momentum
\begin{equation}
\label{E:radial-mom}
\mathcal{P}_q(\psi) = \Im \int_{\mathbb{R}^3} \left( \theta_1 \Big( \frac{r-q}{q} \Big) \right)^2 \bar \psi \; \partial_r \psi  \, dx.
\end{equation}
Define the functional
\begin{equation}
\label{E:W-def}
W(\psi) = \underbrace{\left( 1+ \frac{v^2}{4\lambda^2} \right)}_{O(1)} M(\psi) - \underbrace{\frac{v}{\lambda^2}}_{O(t^{2/3})} \mathcal{P}_q(\psi) + \underbrace{\frac{1}{\lambda^2}}_{O(t^{4/3})} E(\psi),
\end{equation}
where $M(\psi)$, $\mathcal{P}_q(\psi)$, and $E(\psi)$ are the mass, localized radial momentum, and energy functionals defined in \eqref{E:MASS}, \eqref{E:radial-mom}, and \eqref{E:ENERGY}.
Define the Lyapunov functional
\begin{equation}
\label{E:G-def}
G(h) \defeq W(\psi) - W(\psi^{(N)}) - \la W'(\psi^{(N)}),h\ra .
\end{equation}

The statement and proof of the following Lemma are based on Holmer-Lin \cite[Lemma 5.1]{HL}.
\begin{lemma}
\label{L:def-G}
For $G(h)$ as defined in \eqref{E:G-def}, we have
\begin{equation}
\label{E:deriv-p6}
\partial_t G(h) = \{E,W\}(\psi) - \{E, W\}(\psi^{(N)}) - \la \{E,W\}'(\psi^{(N)}),h\ra - \mathcal{E}_1 + \mathcal{E}_2 ,
\end{equation}
where
$$
\mathcal{E}_1 \defeq \la W''(\psi^{(N)})H^{(N)}, h \ra + \la W'(\psi^{(N)}), JE'(\psi) - JE'(\psi^{(N)}) - JE''(\psi^{(N)})h \ra,
$$
$\mathcal{E}_2$ is the result of the time derivative landing on any of the parameters in $W$, and
$H^{(N)}$ is as in \eqref{E:Ham2}.
\end{lemma}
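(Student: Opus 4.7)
The strategy is direct differentiation of $G(h)=W(\psi)-W(\psi^{(N)})-\la W'(\psi^{(N)}),h\ra$, using the Hamiltonian equations \eqref{E:Ham1}, \eqref{E:Ham2}, and then reorganizing the output to exhibit the Poisson-bracket structure. I would first split $\partial_t$ into two parts: (a) contributions in which $\partial_t$ acts on $\psi$, $\psi^{(N)}$, or $h$, and (b) contributions in which $\partial_t$ lands on the time-dependent parameters $(q,v,\lambda)$ that appear in the coefficients of $W$ through \eqref{E:W-def}; the latter collection will by definition be $\mathcal{E}_2$, and I simply carry it along.

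For part (a), plugging in \eqref{E:Ham1}--\eqref{E:Ham2} and $\partial_t h = JE'(\psi)-JE'(\psi^{(N)})-H^{(N)}$ produces
\begin{align*}
\partial_t^{(\text{a})} G(h)
&= \la W'(\psi),JE'(\psi)\ra - \la W'(\psi^{(N)}), JE'(\psi^{(N)})+H^{(N)}\ra \\
&\quad - \la (W''(\psi^{(N)})[JE'(\psi^{(N)})+H^{(N)}]),h\ra \\
&\quad - \la W'(\psi^{(N)}), JE'(\psi)-JE'(\psi^{(N)})-H^{(N)}\ra.
\end{align*}
The two $\pm\la W'(\psi^{(N)}),H^{(N)}\ra$ terms cancel, and the first two displayed inner products collapse to $\{E,W\}(\psi)-\{E,W\}(\psi^{(N)})$, the Poisson-bracket difference.

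The remaining algebraic step is to recognize the linear-in-$h$ correction. Differentiating $\{E,W\}(\psi)=\la JE'(\psi),W'(\psi)\ra$ at $\psi=\psi^{(N)}$ along the direction $h$ gives, using self-adjointness of $W''$ and $E''$ and symmetry of the real inner product,
\begin{equation*}
\la \{E,W\}'(\psi^{(N)}),h\ra
= \la W''(\psi^{(N)}) JE'(\psi^{(N)}),h\ra + \la W'(\psi^{(N)}), JE''(\psi^{(N)}) h\ra.
\end{equation*}
Subtracting and adding this quantity in my expression for $\partial_t^{(\text{a})} G(h)$ and collecting terms with $W''$ versus $W'$, the $W''$ pieces regroup into $-\la\{E,W\}'(\psi^{(N)}),h\ra$ (first term) plus a leftover $-\la W''(\psi^{(N)}) H^{(N)},h\ra$, while the $W'$ pieces become $-\la W'(\psi^{(N)}),JE'(\psi)-JE'(\psi^{(N)})-JE''(\psi^{(N)})h\ra$. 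These last two leftover pieces together are exactly $-\mathcal{E}_1$ as defined in the statement, completing \eqref{E:deriv-p6}.

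There is no real analytic difficulty here: the only step requiring care is the bookkeeping with the skew-adjointness $J^*=-J$ and the self-adjointness of $E''(\psi^{(N)})$ and $W''(\psi^{(N)})$, so that the second Gateaux derivative of the bracket unfolds correctly. The regularity needed for the two differentiations of $W$ is supplied by the smoothness and exponential localization of $\psi^{(N)}$ (cf.\ \eqref{E:decayU-N}) together with the $H^1$-regularity of $\psi$, which makes all pairings finite, so the formal manipulation is rigorous.
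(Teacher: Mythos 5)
Your proposal is correct and follows essentially the same route as the paper: differentiate the three terms of $G$ using \eqref{E:Ham1}--\eqref{E:Ham2} (setting aside the parameter derivatives as $\mathcal{E}_2$), observe the cancellation of $\la W'(\psi^{(N)}),H^{(N)}\ra$, and then identify the Gateaux derivative $\la \{E,W\}'(\psi^{(N)}),h\ra$ to regroup the leftovers into $-\mathcal{E}_1$. The only cosmetic difference is that you unfold $\{E,W\}'$ using the self-adjointness of $W''$ up front, whereas the paper records it as in \eqref{E:deriv-p5}; the content is identical.
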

Explicitly,
\begin{equation}
\label{E:E2}
\mathcal{E}_2 =
\begin{aligned}[t]
&( \frac12 v \dot v \lambda^{-2} - \frac12 v^2 \lambda^{-3}\dot \lambda) (M(\psi)-M(\psi^{(N)})-\la M'(\psi^{(N)}),h\ra) \\
&+ (-\dot v \lambda^{-2} - 2 v \lambda^{-3}\dot \lambda) (\mathcal{P}_q(\psi) - \mathcal{P}_q(\psi^{(N)}) - \la P_q'(\psi^{(N)}),h\ra) \\
&- \lambda^{-2}\dot \lambda (E(\psi) - E(\psi^{(N)}) - \la E'(\psi^{(N)}),h\ra) .
\end{aligned}
\end{equation}

\begin{proof}
We write expressions for the time derivatives of each term in \eqref{E:G-def}, dropping the terms that lead to \eqref{E:E2}.  First, from \eqref{E:Ham1},
\begin{equation}
\label{E:deriv-p1}
\begin{aligned}
\partial_t W(\psi) &= \la W'(\psi), \partial_t \psi\ra \\
&=   \la W'(\psi), JE'(\psi)\ra .
\end{aligned}
\end{equation}
Next, from \eqref{E:Ham2},
\begin{equation}
\label{E:deriv-p2}
\begin{aligned}
\partial_t W(\psi^{(N)}) &= \la W'(\psi^{(N)}), \partial_t \psi^{(N)}\ra \\
&= \la W'(\psi^{(N)}), JE'(\psi^{(N)}) + H^{(N)}\ra .
\end{aligned}
\end{equation}
Finally, from \eqref{E:Ham1} and \eqref{E:Ham2},
\begin{equation}
\label{E:deriv-p3}
\begin{aligned}
\partial_t \la W'(\psi^{(N)}),h \ra
&= \la W''(\psi^{(N)})\partial_t \psi^{(N)}, h \ra + \la W'(\psi^{(N)}), \partial_t \psi - \partial_t \psi^{(N)}  \ra \\
&=
\begin{aligned}[t]
&\la W''(\psi^{(N)})(JE'(\psi^{(N)})+H^{(N)}), h \ra \\
&+ \la W'(\psi^{(N)}), JE'(\psi) - JE'(\psi^{(N)}) - H^{(N)} \ra .
\end{aligned}
\end{aligned}
\end{equation}
Taking \eqref{E:deriv-p1} minus \eqref{E:deriv-p2} minus \eqref{E:deriv-p3}, and noting the cancelation of $ \la W'(\psi^{(N)}), H^{(N)}\ra$ in \eqref{E:deriv-p2} and \eqref{E:deriv-p3}, we obtain
\begin{equation}
\label{E:deriv-p4}
\partial_t G( h) =
\begin{aligned}[t]
&\la W'(\psi), JE'(\psi)\ra - \la W'(\psi^{(N)}), JE'(\psi^{(N)}) \ra \\
&- \la W''(\psi^{(N)})(JE'(\psi^{(N)})+H^{(N)}), h\ra \\
&- \la W'(\psi^{(N)}), JE'(\psi) - JE'(\psi^{(N)}) \ra .
\end{aligned}
\end{equation}
Since
$$
\{E, W\}(\psi) = \la JE'(\psi), W'(\psi)\ra ,
$$
we compute
\begin{equation}
\label{E:deriv-p5}
\la \{E,W\}'(\psi^{(N)}), h \ra = \la JE''(\psi^{(N)})h, W'(\psi^{(N)})\ra + \la JE'(\psi^{(N)}), W''(\psi^{(N)})h\ra .
\end{equation}
Now, \eqref{E:deriv-p6} follows from \eqref{E:deriv-p4} and \eqref{E:deriv-p5}.
\end{proof}

Recall the definition of the $X^k$ norm in \eqref{E:X-norms}.
By processing the terms in Lemma \ref{L:def-G}, we obtain the following upper bound on $\partial_t G$:
\begin{proposition}
\label{P:G}
Let $G(h)$ be defined as in \eqref{E:G-def} and suppose\footnote{This is part of the bootstrap hypothesis.  In fact, the argument ultimately yields the much stronger estimate $\|h\|_{X^1} \leq t^{2N/3}$.} $\|h\|_{X^1}\leq 1$.  Then
\begin{equation}
\label{E:G-h-upperbound}
|\partial_t \, G(h) | \lesssim t^{-1} \|h\|_{X^1}^2 +  t \|H^{(N)}\|_{X^1}^2 .
\end{equation}
This bound is valid for $\epsilon\leq t\leq t_0(N)$, and the implicit constants are independent of $\epsilon$ and $N$.
\end{proposition}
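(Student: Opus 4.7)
Starting from Lemma~\ref{L:def-G}, I would bound the three pieces
\[
A \defeq \{E,W\}(\psi) - \{E,W\}(\psi^{(N)}) - \la \{E,W\}'(\psi^{(N)}),h\ra, \quad -\mathcal{E}_1, \quad \mathcal{E}_2
\]
separately within the budget $t^{-1}\|h\|_{X^1}^2 + t\|H^{(N)}\|_{X^1}^2$, relying throughout on the radial Sobolev bound \eqref{E:deriv-p13}, the pointwise estimate $\|\psi^{(N)}\|_{L^\infty}\lesssim\lambda\sim t^{-2/3}$ from \eqref{E:decayU-N}, the $H^{(N)}$ estimates \eqref{E:H-N}, and the bootstrap $\|h\|_{X^1}\leq 1$.

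The first step is to simplify $A$. Since mass and energy both Poisson-commute with $E$, i.e.\ $\{E,M\}=\{E,E\}=0$, the only surviving piece of $\{E,W\}$ is $-(v/\lambda^2)\{E,\mathcal{P}_q\}$. A direct computation of $\{E,\mathcal{P}_q\}$ via the standard NLS radial momentum identity decomposes it into a flux part $\int(\partial_r\chi)(\cdots)$, supported on the annulus $\{|r-q|/q\in[1/20,1/10]\}$, together with a bulk contribution $\int(\chi/r)(\cdots)$ coming from the $\tfrac{2}{r}\partial_r$ piece of the 3D radial Laplacian, supported on $\{\chi\neq 0\}$. The cutoff hierarchy \eqref{E:trunc-1}, \eqref{E:suppU-N} places $\{\partial_r\chi\neq 0\}$ strictly outside $\supp\psi^{(N)}$, so in the Taylor remainder the flux piece reduces to a purely $h$-dependent expression, while the bulk piece produces cross-terms of schematic form $\int(\chi/r)|\psi^{(N)}|^2|h|^2$ plus higher powers of $h$. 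Using $|\partial_r\chi|,|\chi/r|\lesssim q^{-1}\sim t^{-1/3}$, the prefactor $v/\lambda^2\sim t^{2/3}$, and the radial Sobolev estimate, every piece of $A$ lands within $t^{-1}\|h\|_{X^1}^2$.

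For $\mathcal{E}_1$ I would treat the two summands in turn. For $\la W''(\psi^{(N)})H^{(N)},h\ra$, expand $W''$ from the definition of $W$, integrate by parts the dominant $-2\lambda^{-2}\Delta$ contribution to get $\lambda^{-2}\la \nabla H^{(N)},\nabla h\ra$, and control the multiplicative pieces with the pointwise bound on $\psi^{(N)}$. Cauchy--Schwarz followed by the weighted AM-GM inequality $2|ab|\leq t\,a^2 + t^{-1}b^2$ (with $a$, $b$ chosen in matched norms) then cleanly splits this contribution into $t\|H^{(N)}\|_{X^1}^2 + t^{-1}\|h\|_{X^1}^2$. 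For the second summand, the Laplacian part of $E'$ is linear and cancels in the Taylor remainder, leaving only the cubic nonlinearity, whose second-order remainder has the schematic form $\psi^{(N)} h^2 + h^3$; paired against $W'(\psi^{(N)})$ (which is supported where $\chi\neq 0$) and estimated with H\"older and the radial Sobolev inequality, these pieces stay within $t^{-1}\|h\|_{X^1}^2$ thanks to the bootstrap.

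Finally, I would estimate $\mathcal{E}_2$ term by term using \eqref{E:E2} and the blow-up rates \eqref{E:params-0}. The mass remainder is exactly $\|h\|_{L^2}^2$ with coefficient of size $t^{-1}$, giving $t^{-1}\|h\|_{X^1}^2$ immediately. The momentum remainder reduces, by the quadratic nature of $\mathcal{P}_q$, to $\mathcal{P}_q(h)$, bounded by $\|h\|_{L^2}\|\nabla h\|_{L^2}\lesssim t^{-2/3}\|h\|_{X^1}^2$, which combined with its $t^{-1/3}$ coefficient again yields $t^{-1}\|h\|_{X^1}^2$. For the energy remainder, the gradient part $\int|\nabla h|^2\lesssim t^{-4/3}\|h\|_{X^1}^2$ matched against the coefficient coming from $\partial_t\lambda^{-2}$, together with $\int|\psi^{(N)}|^2|h|^2$ handled by the $L^\infty$ bound on $\psi^{(N)}$ and the cubic/quartic pieces absorbed by the bootstrap, also gives $t^{-1}\|h\|_{X^1}^2$. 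The main obstacle is the bookkeeping: the various positive and negative powers of $t$ (in the prefactors, the cutoff derivatives, the radial Sobolev constant, and the parameter time-derivatives) must combine into exactly $t^{-1}$ with essentially no slack on every piece, and the tightest such balance is the gradient contribution in the energy Taylor remainder of $\mathcal{E}_2$, which requires tracking the exact size of $\partial_t \lambda^{-2}$ against the definition of the $X^1$ norm.
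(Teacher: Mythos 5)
Your plan follows the paper's own proof quite closely: the reduction $\{E,W\}=-(v/\lambda^2)\{E,\mathcal{P}_q\}$ using $\{E,E\}=\{E,M\}=0$, the explicit computation of $\{E,\mathcal{P}_q\}$ with weights of size $t^{-1/3}r^2$ localized to $r\sim t^{1/3}$, the Cauchy--Schwarz/AM--GM splitting of $\la W''(\psi^{(N)})H^{(N)},h\ra$ into $t^{-1}\|h\|_{X^1}^2+t\|H^{(N)}\|_{X^1}^2$, and the term-by-term treatment of $\mathcal{E}_2$ (where you correctly match $\partial_t\lambda^{-2}\sim t^{1/3}$ against $\|\nabla h\|_{L^2}^2\lesssim t^{-4/3}\|h\|_{X^1}^2$) are all the same steps the paper takes.

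There is, however, one genuine gap, in the second summand of $\mathcal{E}_1$. The Taylor remainder is indeed schematically $\psi^{(N)}h^2+h^3$, but H\"older, the radial Sobolev inequality \eqref{E:deriv-p13}, and the bootstrap $\|h\|_{X^1}\leq 1$ only yield
\begin{equation*}
|\la W'(\psi^{(N)}),J(\psi^{(N)}h^2+h^3)\ra|
\lesssim \|W'(\psi^{(N)})\|_{L^2}\bigl(\|\psi^{(N)}\|_{L^\infty}+\|h\,1_{r\sim t^{1/3}}\|_{L^\infty}\bigr)\|h\,1_{r\sim t^{1/3}}\|_{L^\infty}\|h\|_{L^2},
\end{equation*}
which is $\lesssim \|W'(\psi^{(N)})\|_{L^2}\,t^{-4/3}\|h\|_{X^1}^2$. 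The naive bound $\|W'(\psi^{(N)})\|_{L^2}\lesssim \|\psi^{(N)}\|_{X^2}\lesssim 1$ then leaves you at $t^{-4/3}\|h\|_{X^1}^2$, a factor $t^{-1/3}$ over budget --- and this loss is fatal downstream, since in Proposition \ref{P:Cauchy-N} one integrates $\partial_tG$ under $\|h\|_{X^1}\leq 2t^{2N/3}$, and $\int_\epsilon^t s^{4N/3-4/3}\,ds\sim N^{-1}t^{4N/3-1/3}$ is \emph{not} $O(t^{4N/3})$, so the bootstrap would not close. The missing ingredient is the paper's estimate \eqref{E:deriv-p15}, $\|W'(\psi^{(N)})\|_{L^2}\lesssim t^{1/3}$: because $\varphi=\sech$ solves $\varphi''-\varphi+2\varphi^3=0$, the leading combination in $W'(\psi^{(N)})$ (namely $2\psi^{(N)}-2\lambda^{-2}\partial_r^2\psi^{(N)}-4\lambda^{-2}|\psi^{(N)}|^2\psi^{(N)}$, up to the phase and momentum corrections) annihilates the principal profile of $\psi^{(N)}$ and leaves only $O(t^{1/3})$ remainders. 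You need to state and exploit this cancellation; without it the claimed bound on $\mathcal{E}_1$ does not follow from the tools you invoke.
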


\begin{proof}
To apply Lemma \ref{L:def-G}, we need to bound $\mathcal{E}_1$, $\mathcal{E}_2$, and to compute the Poisson bracket $\{E, W\}$.

Computation yields (here, $\theta_1$ is short for $\theta_1((r-q)/q)$),
\begin{equation}
\label{E:Eprime}
\begin{aligned}
E'(\psi) &= -2\Delta \psi - 4|\psi|^2\psi\\
&= -2\partial_r^2 \psi - 4 r^{-1}\partial_r \psi - 4|\psi|^2\psi ,
\end{aligned}
\end{equation}
and, for convenience, taking $\alpha = \partial_r ( \theta_1^2) + 2r^{-1} \theta_1^2$, we have
\begin{equation}
\label{E:Pprime}
\mathcal{P}_q'(\psi) = -i (2\theta_1^2\partial_r \psi + \alpha \psi) .
\end{equation}

First, we address $\mathcal{E}_1$.  We have
\begin{equation}
\label{E:deriv-p11}
W'(\psi^{(N)}) =
\begin{aligned}[t]
&2\underbrace{(1+\frac14v^2\lambda^{-2})}_{\mathcal{O}(1)} \psi^{(N)}
- i\underbrace{v\lambda^{-2}}_{\mathcal{O}(t^{2/3})}( 2\theta_1^2 \partial_r \psi^{(N)} + \alpha \psi^{(N)} )\\
&+ \underbrace{\lambda^{-2}}_{\mathcal{O}(t^{4/3})} ( -2\Delta \psi^{(N)} - 4 |\psi^{(N)}|^2\psi^{(N)}) .
\end{aligned}
\end{equation}
By \eqref{E:deriv-p11} and \eqref{E:deriv-p13}, we obtain
$$
\|W'(\psi^{(N)})\|_{L^2} \lesssim \|\psi^{(N)} \|_{X^2} \lesssim 1 .
$$
However, this ignores the fact that the leading order term in $\psi^{(N)}$ is the push-forward of $\varphi(\rho)=\sech \rho$, which leads to a vanishing term in \eqref{E:deriv-p11}.  Thus, we in fact have
\begin{equation}
\label{E:deriv-p15}
\|W'(\psi^{(N)}) \|_{L^2} \lesssim t^{1/3} .
\end{equation}

We have
\begin{equation}
\label{E:deriv-p12}
E'(\psi)-E'(\psi^{(N)}) - E''(\psi^{(N)})h = \psi^{(N)}h^2 + h^3
\end{equation}
(ignoring complex conjugates).  In the pairing $\la W'(\psi^{(N)}), JE'(\psi)-JE'(\psi^{(N)}) - JE''(\psi^{(N)})h \ra$, each term will have a factor of $\psi^{(N)}$, which localizes to $r\sim t^{1/3}$.

By \eqref{E:deriv-p12}, \eqref{E:deriv-p13}, and \eqref{E:deriv-p15}, it follows that
\begin{equation}
\label{E:E1-p1}
\begin{aligned}
\indentalign |\la W'(\psi^{(N)}), JE'(\psi)-JE'(\psi^{(N)}) - JE''(\psi^{(N)})h \ra| \\
&\lesssim \|W'(\psi^{(N)})\|_{L^2} ( \|\psi^{(N)}\|_{L^\infty} \|h \cdot 1_{r\sim t^{1/3}} \|_{L^\infty} \|h\|_{L^2} + \|h\cdot 1_{r\sim t^{1/3}}\|_{L^\infty}^2 \|h\|_{L^2}) \\
&\lesssim t^{-1}\|h\|_{X^1}^2 .
\end{aligned}
\end{equation}

From \eqref{E:deriv-p11}, we find that $W''(\psi^{(N)})$ is an operator of the form
\begin{equation}
\label{E:deriv-p16}
W''(\psi^{(N)}) = \mathcal{O}(1) + \mathcal{O}(t^{2/3})(\partial_r + \alpha) + \mathcal{O}(t^{4/3})( \Delta + (\psi^{(N)})^2 \,) .
\end{equation}
In the expression, $\la W''(\psi^{(N)})H^{(N)},h \ra$, the presence of the function $H^{(N)}$ gives localization to $r\sim t^{1/3}$, and hence, \eqref{E:deriv-p13} is applicable.  It follows from \eqref{E:deriv-p16} and \eqref{E:deriv-p13} that
\begin{equation}
\label{E:E1-p2}
|\la W''(\psi^{(N)})H^{(N)}, h \ra | \lesssim \|H^{(N)}\|_{X^1} \|h\|_{X^1} \lesssim t^{-1} \|h\|_{X^1}^2 + t \|H^{(N)}\|_{X^1}^2 .
\end{equation}
Combining \eqref{E:E1-p1} and \eqref{E:E1-p2}, we obtain the bound
\begin{equation}
\label{E:E1-tot}
|\mathcal{E}_1| \lesssim t^{-1}\|h\|_{X^1}^2 + t \|H^{(N)}\|_{X^1}^2 .
\end{equation}

Next, we compute the bound on $\mathcal{E}_2$.  Working with \eqref{E:E2}, we obtain
\begin{equation}
\label{E:E2-tot}
|\mathcal{E}_2| \lesssim t^{-1} \|h\|_{X^1}^2 .
\end{equation}

Next, we compute the Poisson bracket $\{E, W\}$.
However, since $\{E,E\}=0$ and $\{E,M\}=0$, we have $\{E,W\} =  - \frac{v}{\lambda^2} \{E, \mathcal{P}_q\}$.  Substituting \eqref{E:Eprime} and \eqref{E:Pprime} and performing numerous integrations by parts, we obtain
\begin{align*}
\{E,\mathcal{P}_q\} (\psi) &= \Re \int (-i) E'(\psi) \overline{\mathcal{P}'_q(\psi)} \, dx \\
&= 8\pi \Re \int_{r=0}^\infty (-\partial_r^2 \psi - 2r^{-1}\partial_r \psi - 2|\psi|^2\psi)(2\theta_1^2 \partial_r \bar \psi  + \alpha \bar \psi) \, r^2 \, dr
\\
&=  \begin{aligned}[t]
&8\pi\int_{r=0}^\infty |\partial_r \psi|^2 \underbrace{(\partial_r (\theta_1^2r^2) -4 \theta_1^2r +\alpha r^2)}_{\sim \; t^{-1/3}r^2 \; 1_{r\sim t^{1/3}} } \, dr \\
&+ 8\pi \int_{r=0}^\infty |\psi|^4 \underbrace{( -\partial_r (\theta_1^2r^2) +2 \alpha r^2)}_{\sim \; t^{-1/3}r^2 \; 1_{r\sim t^{1/3}}} \, dr \\
&+ 8\pi \int_{r=0}^\infty |\psi|^2 \underbrace{( -\frac12 \partial_r(\alpha r^2) + \partial_r(r\alpha))}_{\sim \; t^{-1}r^2 \; 1_{r\sim t^{1/3}}} \, dr .
\end{aligned}
\end{align*}
From this, it is apparent that the quadratic (in $h$) part of $\{E,\mathcal{P}_q\}(\psi)$ satisfies
\begin{align*}
\indentalign |v\lambda^{-2}| | \{E,\mathcal{P}_q\}(\psi) - \{ E, \mathcal{P}_q\}(\psi^{(N)}) - \la \{E, \mathcal{P}_q\}'(\psi^{(N)}), h\ra | \\
&\lesssim t^{1/3} \|\nabla h\|_{L^2}^2 + t^{1/3} \|\psi^{(N)}\|_{L^\infty}^2 \|h\|_{L^2}^2 + t^{-1/3}\|h\|_{L^2}^2 \\
&\lesssim t^{-1} \|h\|_{X^1}^2 .
\end{align*}
This furnishes a bound on the main term in \eqref{E:deriv-p6}, while $\mathcal{E}_1$ and $\mathcal{E}_2$ have been bounded above in \eqref{E:E1-tot} and \eqref{E:E2-tot}.  Thus, the bound \eqref{E:G-h-upperbound} follows from Lemma \ref{L:def-G}.
\end{proof}

We next write the remainder $h$ as the push-forward of a function $f$:
\begin{equation}
\label{E:h-f}
h(r,t) = e^{i\theta(t)} e^{iv(t)r/2} \lambda(t) \, f \big(\lambda(t)(r-q(t)), t \big)
\end{equation}
and recalling that $\rho = \lambda(r-q)$, define $f_1$ as the truncation
\begin{equation}
\label{E:f-1}
f_1(\rho,t) \defeq \theta_1\left(\frac{\rho}{\lambda q}\right)\lambda^{1/2}\left( \frac{\rho}{\lambda} + q\right) f(\rho, t).
\end{equation}
Note that due to the support of $\theta_1$ (which restricts $|\rho|\ll t^{-1/3}$),
we have that $\lambda^{1/2}(\rho/\lambda + q) \sim 1$.

Let
$$
\xi_0 = i\varphi \,, \quad
\xi_1 = -\partial_\rho \varphi \,, \quad
\xi_2 = \varphi+ \rho \partial_\rho \varphi\,, \quad
\xi_3 = i\rho\varphi,
$$
where  $\varphi(\rho)=\sech\rho$,
and, recalling that $J=-\tfrac12 i$ and $J^{-1} = 2i$, define
\begin{equation}
\label{E:kappa-defs}
\kappa_j \defeq \la f_1, \tfrac12 J^{-1}\xi_j \ra_{L^2(-\infty<\rho<\infty)} .
\end{equation}
Define the projection operator
\begin{equation}
\label{E:symp-proj}
Pf_1 = -\kappa_2(f_1) \, \xi_0 + \kappa_0(f_1) \, \xi_2 - \kappa_1(f_1)\, \xi_3 + \kappa_3(f_1) \,\xi_1 .
\end{equation}

The following proposition establishes the coercivity of the functional $G(h)$.
\begin{proposition}
\label{P:G-lower}
Let $G(h)$ be defined as in \eqref{E:G-def}, and suppose that $\|h\|_{X^1} \leq t^{2/3}$. Then
\begin{equation}
\label{E:G-h-lowerbound}
\|h\|_{X^1}^2 \lesssim G(h) + \sum\limits_{i=0}^3 |\kappa_i|^2,
\end{equation}
where $\kappa_j$, $j=0,1,2,3$, are defined in \eqref{E:kappa-defs}.  This bound is valid for $\epsilon \leq t\leq t_0(N)$.  The implicit constant is independent of $\epsilon$ and $N$.
\end{proposition}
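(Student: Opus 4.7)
The plan is to treat $G(h)$ as a second-order Taylor remainder, push forward to the scaled variable $\rho$ so that the quadratic part becomes the 1D linearized Hamiltonian $\mathcal H$ acting on $\vec f_1$, and apply a standard coercivity bound for $\mathcal H$ modulo the four-dimensional generalized null space spanned by $\xi_0,\xi_1,\xi_2,\xi_3$.

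I would begin by observing that the specific linear combination defining $W$ is chosen precisely so that, in the ``unboosted'' frame $\phi \defeq e^{-iv r/2}\psi$, $\phi^{(N)} \defeq e^{-iv r/2}\psi^{(N)}$, $g \defeq e^{-iv r/2}h$, one has the algebraic identity $W(\psi)=M(\phi)+\lambda^{-2}E(\phi)$ modulo tails from the truncation $\theta_1$. Taylor expanding yields
\[
G(h) \;=\; \|g\|_{L^2}^2 + \lambda^{-2}\Bigl(\|\nabla g\|_{L^2}^2 - 4\!\int\!|\phi^{(N)}|^2|g|^2 - 2\,\mathrm{Re}\!\int\!(\overline{\phi^{(N)}})^{\,2} g^2\Bigr) + R_3(g),
\]
where $R_3$ gathers the cubic/quartic terms from $|\phi|^4$. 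By the radial Sobolev inequality \eqref{E:deriv-p13} and the bootstrap hypothesis $\|h\|_{X^1}\leq t^{2/3}$, one has $\lambda^{-2}|R_3|\lesssim t^{2/3}\|h\|_{X^1}^2$, absorbable into the left-hand side for small $t$.

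Pushing forward to $\rho=\lambda(r-q)$ via $g(r)=\lambda f(\rho)$ and $\phi^{(N)}=\lambda(\varphi(\rho)+O(t^{1/3}))$, the radial Jacobian $r^{2}dr=\lambda^{-1}(\rho/\lambda+q)^{2}d\rho$ is exactly absorbed by the weight $\lambda^{1/2}(\rho/\lambda+q)$ built into the definition \eqref{E:f-1} of $f_1$, while the $v^{2}/(4\lambda^{2})$ contribution from $|\partial_r g|^{2}$ produces the $\|f_1\|_{L^2}^2$ term. In the bulk $|\rho|\ll\lambda q$, the quadratic form thus becomes $8\pi\,\langle\mathcal H\vec f_1,\vec f_1\rangle + O(t^{1/3})\|f_1\|_{H^1}^2$, the error absorbing the $\chi_k$-corrections in $U^{(N)}-\varphi$, the curvature $2/r=2\omega q/(1+\omega q\rho)$, the deviation $\omega-1=O(t^{2/3})$, and the truncation derivative $\theta_1'$. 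The tail of $h$ in $|r-q|\gtrsim q$, where $f_1$ vanishes, is controlled directly by the positive terms $\|h\|_{L^2}^2+t^{4/3}\|\nabla h\|_{L^2}^2$ already present in $G(h)$, since $\psi^{(N)}\equiv 0$ there.

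Finally, Weinstein-type coercivity for $L_+,L_-$ around $\varphi=\sech\rho$ gives $\langle L_+u,u\rangle\gtrsim\|u\|_{H^1}^2$ when $u\perp\varphi,\rho\varphi$ and $\langle L_-v,v\rangle\gtrsim\|v\|_{H^1}^2$ when $v\perp\varphi_\rho,\,\varphi+\rho\varphi_\rho$; unpacking $\kappa_j=\la f_1,\tfrac12 J^{-1}\xi_j\ra$ in real and imaginary components shows that these four conditions are precisely $\kappa_j(f_1-Pf_1)=0$. Since $\mathcal H\vec\xi_0=\mathcal H\vec\xi_1=0$ and $\mathcal H\vec\xi_2,\mathcal H\vec\xi_3\in\mathrm{span}\{\vec\xi_0,\vec\xi_1\}$, the cross-term symplectic pairings vanish, yielding $\langle\mathcal H\vec f_1,\vec f_1\rangle \geq c\|f_1\|_{H^1}^2 - C\sum_j|\kappa_j|^2$; combined with the previous steps this gives \eqref{E:G-h-lowerbound}. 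The main technical obstacle will be the pushforward step: verifying that each small parameter ($\chi_k$, curvature $2/r$, $\omega-1$, $\theta_1'$, radial weight) contributes only $O(t^{1/3})\|f_1\|_{H^1}^2$ or smaller to the quadratic form, so as not to disrupt the leading coercivity from $\mathcal H$.
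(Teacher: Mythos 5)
Your proposal is correct and follows essentially the same route as the paper: expand $G$ to its quadratic part plus a cubic/quartic remainder controlled by the radial Sobolev inequality and the bootstrap hypothesis, split into the soliton region and the exterior (where the quadratic form is manifestly positive), push the interior part forward to the $\rho$-variable where it becomes the 1D linearized quadratic form $\langle L_+u,u\rangle+\langle L_-v,v\rangle$ acting on $f_1$ up to $O(t^{1/3})$ errors, and invoke the classical coercivity modulo the four symplectic orthogonality conditions measured by the $\kappa_j$. The only differences are presentational (you unboost via $e^{-ivr/2}$ and quote Weinstein-type coercivity for $L_\pm$ directly, whereas the paper keeps the boost inside the operator $B$ and phrases the coercivity through the projection $P$), so no further comment is needed.
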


\begin{proof}
From \eqref{E:G-def},
\begin{equation}
\label{E:A2}
G(h) = \la W''(\psi^{(N)})h,h\ra + \mathcal{E}_3 ,
\end{equation}
where $\mathcal{E}_3$ takes the form
$$
\mathcal{E}_3 = \mathcal{O}(t^{4/3}) \int \psi^{(N)}h^3 + \mathcal{O}(t^{4/3}) \int h^4.
$$
Hence,
\begin{equation}
\label{E:A3}
\begin{aligned}
| \mathcal{E}_3| &\lesssim t^{4/3} \|\psi^{(N)}\|_{L^\infty} \|h \cdot 1_{r\sim t^{1/3}} \|_{L^\infty} \|h\|_{L^2}^2 + t^{4/3} \|h\|_{L^4}^4  \\
&\lesssim \| \psi^{(N)} \|_{X^1} \|h\|_{X^1}^3 + t^{4/3} \|\nabla h \|_{L^2}^2 \|h\|_{L^2} \\
&\lesssim  \|h\|_{X^1}^3 + t^{-2/3} \|h\|_{X^1}^4 \\
&\leq \frac14 \delta \|h\|_{X^1}^2,
\end{aligned}
\end{equation}
where in the second line we have applied \eqref{E:deriv-p13} in the first term and used the 3D Gagliardo-Nirenberg inequality (since there is no spatial localization to $r\sim t^{1/3}$ in this term). From \eqref{E:W-def}, we have that $W''(\psi^{(N)})$ is the operator
$$
W''(\psi^{(N)}) =
\begin{aligned}[t]
&2\left( 1 + \frac14 v^2\lambda^{-2} \right) +2i v\lambda^{-2} \theta_1^2 \partial_r \\
&+ 2\lambda^{-2} (-\partial_r^2 - 2r^{-1}\partial_r - 4|\psi^{(N)}|^2 - 2 (\psi^{(N)})^2 \mathcal{C}) ,
\end{aligned}
$$
where $\mathcal{C}$ is the operator of complex conjugation.  For convenience, let
$$
B \defeq \begin{aligned}[t]
&2\left( 1 + \frac14 v^2\lambda^{-2} \right) +2i v\lambda^{-2} \theta_1^2 \partial_r \\
&+ 2\lambda^{-2} (-\partial_r^2 - 4|\psi^{(N)}|^2 - 2 (\psi^{(N)})^2 \mathcal{C})
\end{aligned}
$$
and
$$
D \defeq 2\left( 1 + \frac14 v^2\lambda^{-2} \right)- 2\lambda^{-2}\partial_r^2 .
$$
Then using that $\partial_r^2 \, r = r\partial_r^2 + 2\partial_r$, we obtain
\begin{equation}
\label{E:A4a}
\la W''(\psi^{(N)}) h, h \ra_{L^2(\mathbb{R}^3)} = \la B (rh), rh\ra_{L^2(0\leq r \leq \infty)} .
\end{equation}
Let $\theta_2 = (1-\theta_1^2)^{1/2}$ so that $h = \theta_1^2 h + \theta_2^2 h$.  Substituting this decomposition into the right side of \eqref{E:A4a}, we get
\begin{equation}
\label{E:A4}
\la W''(\psi^{(N)}) h, h \ra_{L^2(\mathbb{R}^3)} =  \la B(r\theta_1h), r\theta_1h \ra_{L^2(0< r<\infty)} + \la D(r\theta_2h), r\theta_2h \ra_{L^2(0< r<\infty)} + \mathcal{E}_4 ,
\end{equation}
where
$$
\mathcal{E}_4 =
\sum_{j=1}^2 \big( 2\lambda^{-2}(\la 2(\partial_r \theta_j) \partial_r(\theta_jrh), rh\ra + \la (\partial_r^2\theta_j) r \theta_j h, rh\ra) \big)
$$
arises from the commutator of $\partial_r^2$ and $\theta_j$.  These terms are lower order, however, since an $r$-derivative on $h$ ``costs'' $t^{-2/3}$ in the definition of the $X^1$ norm, whereas $\partial_r$ landing on $\theta_j$ gives only the penalty of $t^{-1/3}$. Specifically,
\begin{equation}
\label{E:A5}
|\mathcal{E}_4| \lesssim t^{1/3} \|h\|_{X^1}^2 \leq \frac14 \delta \|h\|_{X^1}^2 .
\end{equation}
By \eqref{E:A4} and \eqref{E:A5},
\begin{equation}
\label{E:A8}
G(h) = \la B(r\theta_1h), r\theta_1h\ra_{L^2(0<r<\infty)} + \la D(r\theta_2h), r\theta_2h\ra_{L^2(0<r<\infty)} + \mathcal{E}_3+\mathcal{E}_4 .
\end{equation}
Define the operator
$$
A\defeq (1+\partial_\rho^2 + 4 \varphi^2 + 2\varphi^2\mathcal{C}) .
$$
Substituting \eqref{E:h-f}, \eqref{E:f-1}, we obtain
$$
\la B(r\theta_1h), r\theta_1h\ra_{L^2(0<r<\infty)} =  \la A f_1, f_1 \ra_{L^2(-\infty<\rho<\infty)} .
$$
Recalling \eqref{E:symp-proj}, it is a classical fact (see for example \cite[\S 4]{HZ}) that there exists $\delta>0$ such that
\begin{equation}
\label{E:A1}
\delta \|f_1- Pf_1\|_{L^2(\mathbb{R})}^2 \leq \la A(f_1-Pf_1),(f_1-Pf_1)\ra_{L^2(\mathbb{R})} .
\end{equation}
As a projection operator, $P$ satisfies $P^2=P$ and $P^*=P$ (adjoint with respect to the $\la \cdot, \cdot \ra$ inner product defined by \eqref{E:inner-prod}).  Hence,
$$
\|f_1-Pf_1\|_{L^2(\mathbb{R})}^2 = \|f_1\|_{L^2(\mathbb{R})}^2 - \|Pf_1\|_{L^2(\mathbb{R})}^2.
$$
Also,
$$
\|APf_1\|_{L^2(\mathbb{R})} + \|Pf_1\|_{L^2(\mathbb{R})} \lesssim \sum_{k=0}^3 |\kappa_j| .
$$
Applying this to \eqref{E:A1}, we obtain
$$
\delta \|f_1\|_{L^2(\mathbb{R})}^2 \leq \la Af_1, f_1 \ra_{L^2(\mathbb{R})}^2 + \sum_{j=0}^3 |\kappa_j|^2 .
$$
Substituting back \eqref{E:h-f}, \eqref{E:f-1}, we obtain
\begin{equation}
\label{E:A13}
\delta \|r\theta_1 h\|_{L^2(0<r<\infty)}^2 \leq \la B(r\theta_1h),(r\theta_1h)\ra_{L^2(0<r<\infty)}  + \sum_{j=0}^3 |\kappa_j|^2 .
\end{equation}
Directly from the definition of $B$, using integration by parts, we get
\begin{equation}
\label{E:A14}
t^{4/3} \|\partial_r(r\theta_1h)\|_{L^2(0<r<\infty)}^2 \lesssim \la B(r\theta_1h),(r\theta_1h)\ra_{L^2(0<r<\infty)} + \| r\theta_1 h\|_{L^2(0<r<\infty)}^2 .
\end{equation}
The inequalities \eqref{E:A13} and \eqref{E:A14} together yield
\begin{equation}
\label{E:A6}
\delta \|r\theta_1 h\|_{X^1(0<r<\infty)}^2 \leq \la B(r\theta_1h),(r\theta_1h)\ra_{L^2(0<r<\infty)}  + \sum_{j=0}^3 |\kappa_j|^2 .
\end{equation}
Directly from the definition of $D$ via integration by parts,
\begin{equation}
\label{E:A7}
\delta \|r\theta_2 h\|_{X^1(0<r<\infty)}^2 \leq \la D(r\theta_2h),(r\theta_2h)\ra_{L^2(0<r<\infty)} .
\end{equation}
Summing \eqref{E:A6} and \eqref{E:A7}, we obtain
\begin{equation}
\label{E:A9}
\begin{aligned}[t]
\indentalign \delta( \|r\theta_1h\|_{X^1(0<r<\infty)}^2 + \|r\theta_2h\|_{X^1(0<r<\infty)}^2) \\
&\lesssim  \la B(r\theta_1h),(r\theta_1h)\ra_{L^2(0<r<\infty)} + \la D(r\theta_2h),(r\theta_2h)\ra_{L^2(0<r<\infty)} + \sum_{j=0}^3 |\kappa_j|^2 .
\end{aligned}
\end{equation}
Combining \eqref{E:A8} and \eqref{E:A9},  we obtain
\begin{equation}
\label{E:A10}
\delta( \|r\theta_1h\|_{X^1(0<r<\infty)}^2 + \|r\theta_2h\|_{X^1(0<r<\infty)}^2) \\
\lesssim  G(h) + \sum_{j=0}^3 |\kappa_j|^2 - \mathcal{E}_3 - \mathcal{E}_4 .
\end{equation}
Again, by the fact that commutators are of lower order, we have
\begin{equation}
\label{E:A12}
\delta( \|r\theta_1h\|_{X^1(0<r<\infty)}^2 + \|r\theta_2h\|_{X^1(0<r<\infty)}^2) = \delta \|h \|_{X^1(\mathbb{R}^3)}^2 + \mathcal{E}_5 ,
\end{equation}
where
\begin{equation}
\label{E:A11}
|\mathcal{E}_5| \lesssim t^{1/3} \|h\|_{X^1}^2 .
\end{equation}
Combining \eqref{E:A10} and \eqref{E:A12}, and making use of error estimates \eqref{E:A3}, \eqref{E:A5}, and \eqref{E:A11}, we obtain \eqref{E:G-h-lowerbound}.
\end{proof}

We now come back to our original substitution \eqref{E:Psi-2} and the equation \eqref{E:unforced-U} for $U$ (i.e., to our solution before the conditions \eqref{E: params-1} were enforced) and the equation \eqref{E:EqU-2-mod} for $U^{(N)}$.
Consider a four parameter $(\lambda_1, \theta_1, q_1, v_1)$-family of profiles:
\begin{equation}
\label{E:phi-N}
\phi^{(N)}(\lambda_1, q_1, \theta_1, v_1, r, t) \defeq e^{i \Theta (r,t)} \tilde \lambda(t) \, U^{(N)}(\tilde \lambda(t) \big(r-\tilde q (t)\big),t)
\end{equation}
with
$$
\Theta(r,t) = \theta(t) + \theta_1 + (v(t)+v_1)r/2, \quad \tilde \lambda(t) = \lambda_1 \lambda(t), \quad \tilde q(t) = q(t) + q_1.
$$
Here, the parameters $\lambda_1$, $q_1$, $\theta_1$, and $v_1$ are assumed time-{\it independent}, and recall that the (time-dependent) parameters $\theta(t), \lambda(t), q(t),  v(t)$ satisfy conditions \eqref{E: params-1}. For the sake of brevity, we write $\rho = \tilde \lambda (r-\tilde q)$, $\tilde v = v + v_1$, and $U^{(N)}$, which stands for $U^{(N)}(\rho,t)$.
Let
\begin{equation}
\label{E:F-N}
F^{(N)}(r, t) = i \partial_t \phi^{(N)} + \Delta \phi^{(N)} + 2|\phi^{(N)}|^2\phi^{(N)}.
\end{equation}
Using the definition \eqref{E:phi-N}, the above is equivalent to 
$$
F^{(N)}(r, t) =
\begin{aligned}[t]
& e^{i \Theta (r,t)} \tilde \lambda(t) \left(i U_t^{(N)} +\tilde \lambda^2 U_{\rho\rho}^{(N)}\right) + 2 |\phi^{(N)}|^2\phi^{(N)} - \left(\dot \theta + \frac{\dot v r}2 + \frac{\tilde v^2}4 \right) \phi^{(N)}\\
& + i \, e^{i \Theta (r,t)} \left( \dot{\tilde \lambda} U^{(N)}
+\tilde v \tilde \lambda^2 U_\rho^{(N)} + \tilde \lambda (\dot{\tilde \lambda} (r-\tilde q) - \tilde \lambda \dot{\tilde q}) U_\rho^{(N)} \right)\\
& +  e^{i \Theta (r,t)} \frac{2\tilde \lambda^3}{\rho+\tilde\lambda \tilde q} \, U_\rho^{(N)}
+ i \frac{\tilde \lambda \tilde v}{\rho+\tilde \lambda \tilde q} \, \phi^{(N)}.
\end{aligned}
$$
Recalling \eqref{E:EqU-2}, multiplying it by $\tilde \lambda^3$ and substituting into
the second and third terms in the above, we obtain
\begin{equation}
\label{E:EqU-2-mod}
\begin{aligned}[t]
\indentalign F^{(N)} (r,t) =
\begin{aligned}[t]
& e^{\Theta(r,t)} {\tilde \lambda^3(t)}  \left[ i \left(\frac1{\tilde \lambda^2} - \frac1{\lambda^2} \right) U_t^{(N)}
+  \left(1 - \frac{\tilde \gamma}{\tilde \lambda^2} \right) U^{(N)} + i \frac{v_1}{\tilde \lambda} U^{(N)}_{\rho} \right.\\
& + \frac{\dot v}2 \rho  \left(\frac1{\lambda^3} - \frac1{\tilde \lambda^3} \right) U^{(N)}
+ i \frac{\dot \lambda}{\lambda} \left(\frac1{\tilde \lambda^2} - \frac1{\lambda^2} \right) \big(1 + \rho \, \partial_\rho \big) \, U^{(N)}\\
& \left. + i \left(\frac{\tilde v}{\tilde \lambda (\rho + \tilde \lambda \tilde q)} - \frac{v}{\lambda (\rho + \lambda \, q)} \right) U^{(N)} +\left(\frac{2}{\rho + \tilde \lambda \tilde q} - \frac{2}{\rho + \lambda \, q} \right) U_{\rho}^{(N)} + \mathcal{R}_N(\rho, t) \right],
\end{aligned}
\end{aligned}
\end{equation}
where $\tilde \gamma = \dot \theta + \tilde v^2/4 + \dot v \tilde q/2$ and $\rho = \tilde \lambda (r-\tilde q)$.

Define
\begin{equation}
\label{E:etas}
\begin{aligned}
\eta_0 (r,t) & = \partial_{\theta_1} \, \phi^{(N)}\Big|_{\lambda_1=1,\theta_1=q_1=v_1=0}, \\
\eta_1 (r,t) & = \frac1{\lambda(t)} \,\partial_{q_1} \phi^{(N)}\Big|_{\lambda_1=1,\theta_1=q_1=v_1=0}, \\
\eta_2 (r,t) & =  \, \partial_{\lambda_1} \phi^{(N)}\Big|_{\lambda_1=1,\theta_1=q_1=v_1=0}, \\
\eta_3 (r,t) & = (2 \lambda(t) \,\partial_{v_1} - q(t) \lambda(t) \,\partial_{\theta_1} ) \phi^{(N)}\Big|_{\lambda_1=1,\theta_1=q_1=v_1=0}.
\end{aligned}
\end{equation}

Solving for $i \partial_t \phi^{(N)}$ in \eqref{E:F-N} and using \eqref{E:EqU-2-mod} (and commutation of derivatives), we obtain
\begin{equation}
\label{E:partial-eta-0}
i \partial_t \eta_0 =  E''(\psi^{(N)}) \eta_0 +  H^{(N)},
\end{equation}
\begin{equation}
\label{E:partial-eta-1}
i \lambda^{-1} \partial_t ( \lambda \eta_1) =  E''(\psi^{(N)}) \eta_1 +i \frac12 \lambda^{-1} \dot v \, \eta_0 + O_X(t^{-2/3}),
\end{equation}
\begin{equation}
\label{E:partial-eta-2}
i \partial_t (  \eta_2 ) = E''(\psi^{(N)}) \eta_2 - 2 i \lambda^2 \, \eta_0
 + O_X(t^{-1}),
\end{equation}
\begin{equation}
\label{E:partial-eta-3}
i \lambda \, \partial_t (\lambda^{-1} \eta_3 ) = E''(\psi^{(N)}) \eta_3 - 2 i \lambda^2 \, \eta_1 + O_X(t^{-1}).
\end{equation}

\begin{proposition}
\label{P:kappa-bds}
Suppose that $\|h\|_{X^1} \leq t^{2/3}$.  There exists $\sigma_j$ satisfying
\begin{equation}
\label{E:sigma-bds}
|\kappa_j -\sigma_j| \lesssim t^{1/3}\|h\|_{X^1}
\end{equation}
so that
\begin{equation}
\label{E:sigma-est-0}
|\partial_t \sigma_0| \lesssim t^{-4/3}\|h\|_{X^1}^2 + \|H^{(N)}\|_{X^1},
\end{equation}
\begin{equation}
\label{E:sigma-est-1}
|\lambda^{-1}\partial_t \lambda \sigma_1| \lesssim  t^{-1}|\sigma_0| + t^{-2/3}\|h\|_{X^1} + \|H^{(N)}\|_{X^1},
\end{equation}
\begin{equation}
\label{E:sigma-est-2}
|\lambda \partial_t \lambda^{-1} \sigma_2| \lesssim t^{-4/3} |\sigma_0| + t^{-1} \|h\|_{X^1} + \|H^{(N)}\|_{X^1},
\end{equation}
\begin{equation}
\label{E:sigma-est-3}
|\lambda \partial_t \lambda^{-1} \sigma_3| \lesssim t^{-4/3}|\sigma_1 | + t^{-1} \|h\|_X + \|H^{(N)}\|_{X^1}.
\end{equation}
These bounds are valid for $\epsilon \leq t\leq t_0(N)$, and the implicit constants are independent of $\epsilon$ and $N$.
\end{proposition}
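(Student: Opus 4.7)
The plan is to define $\sigma_j$ as an $L^2(\cR^3)$ symplectic pairing between $h$ and the modulation vector $\eta_j$ from \eqref{E:etas}, and then exploit the fact that each $\eta_j$ approximately satisfies the linearized equation around $\psi^{(N)}$. The symplectic form is preserved by the joint linearized flow, so $\partial_t \sigma_j$ vanishes to leading order and is forced to be small.

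Concretely, set
$$
\sigma_j \defeq c_j \, \la h, \, J^{-1}\eta_j \ra_{L^2(\cR^3)}, \qquad j = 0,1,2,3,
$$
where $c_j$ is fixed so that, after the substitutions \eqref{E:Psi-N} for $\psi^{(N)}$ and \eqref{E:h-f} for $h$, the leading contribution to $\sigma_j$ matches $\kappa_j = \la f_1, \tfrac12 J^{-1}\xi_j\ra$ from \eqref{E:kappa-defs}. To verify \eqref{E:sigma-bds}, rewrite $\sigma_j$ as an integral in $\rho$: the 3D radial measure $4\pi r^2 dr$ becomes $4\pi\lambda q^2(1+\rho/(\lambda q))^2 \lambda^{-1}\,d\rho$ after $\rho = \lambda(r-q)$, and the factor $\lambda^{1/2}(\rho/\lambda+q)$ built into $f_1$ in \eqref{E:f-1} is precisely what reconciles this with the flat 1D measure defining $\kappa_j$ on the soliton core. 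The discrepancy $\sigma_j - \kappa_j$ splits into three pieces, each $O(t^{1/3}\|h\|_{X^1})$: (i) the tail $|\rho|\gtrsim t^{-1/3}$ where $\theta_1$ in \eqref{E:f-1} differs from $1$, controlled by the exponential decay of $\xi_j$; (ii) the expansion $(1+\rho/(\lambda q))^2 = 1 + O(t^{1/3})$ on the core $|\rho|\lesssim 1$; and (iii) the perturbations $U^{(N)}-\varphi = O(t^{1/3})$ entering \eqref{E:etas}.

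For the estimates \eqref{E:sigma-est-0}--\eqref{E:sigma-est-3}, I would differentiate $\sigma_j$ using $\partial_t h = JE'(\psi) - JE'(\psi^{(N)}) - H^{(N)}$ (from \eqref{E:Ham1}, \eqref{E:Ham2}) and the $\eta_j$-equations \eqref{E:partial-eta-0}--\eqref{E:partial-eta-3}. Writing $JE'(\psi) - JE'(\psi^{(N)}) = JE''(\psi^{(N)})h + J(\psi^{(N)}h^2 + h^3)$, the leading linear-in-$h$ contribution cancels via the symplectic identity
$$
\la JAh, \, J^{-1}v \ra + \la h, \, J^{-1}JAv \ra = 0 \quad \text{for self-adjoint } A,
$$
which follows from $J^* = -J$ and $A^* = A$. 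The surviving terms are: (a) the quadratic nonlinearity $J(\psi^{(N)}h^2 + h^3)$ paired with the exponentially localized $\eta_j$, giving $O(t^{-4/3}\|h\|_{X^1}^2)$ via the radial Sobolev bound \eqref{E:deriv-p13} together with $\|\psi^{(N)}\|_{L^\infty}\sim \lambda\sim t^{-2/3}$; (b) the forcing $-H^{(N)}$, giving $\|H^{(N)}\|_{X^1}$; (c) for $j=1,2,3$, the off-diagonal coupling terms in \eqref{E:partial-eta-1}--\eqref{E:partial-eta-3} (namely $\tfrac12\lambda^{-1}\dot v\,\eta_0$ inside $\partial_t\eta_1$, $-2i\lambda^2\eta_0$ inside $\partial_t\eta_2$, and $-2i\lambda^2\eta_1$ inside $\partial_t\eta_3$), which after pairing with $h$ and invoking \eqref{E:sigma-bds} produce respectively $t^{-1}|\sigma_0|$, $t^{-4/3}|\sigma_0|$, $t^{-4/3}|\sigma_1|$ once the scalings $\dot v\sim t^{-5/3}$, $\lambda\sim t^{-2/3}$ are inserted; and (d) the residual $O_X(t^{-2/3})$, $O_X(t^{-1})$ errors paired with $h$ by Cauchy--Schwarz. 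The prefactors $\lambda^{\pm 1}\partial_t\lambda^{\mp 1}$ on the left sides of \eqref{E:sigma-est-1}--\eqref{E:sigma-est-3} arise because \eqref{E:partial-eta-1}, \eqref{E:partial-eta-3} are naturally formulated for $\lambda\eta_1$, $\lambda^{-1}\eta_3$, so the corresponding clean time derivative is of $\lambda\sigma_1$ and $\lambda^{-1}\sigma_3$.

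The main obstacle is the bookkeeping in the comparison step: fixing $c_j$ and verifying that every lower-order term from the change of variables, the expansion of $U^{(N)}$, and the subleading structure of $\eta_j$ contributes at most $O(t^{1/3}\|h\|_{X^1})$ to $\sigma_j - \kappa_j$. Beyond that, no new analytic ingredient is required: the symplectic cancellation supplies the main linear term, and the radial Sobolev bound \eqref{E:deriv-p13} already used in Proposition \ref{P:G} handles the quadratic nonlinearity; all remaining work is careful accounting of powers of $t$.
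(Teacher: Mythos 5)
Your proposal is correct and follows essentially the same route as the paper: the paper likewise sets $\sigma_j=\tfrac12\la h,J^{-1}\eta_j\ra_{L^2(\cR^3)}$, obtains \eqref{E:sigma-bds} by a change of variables (via an intermediate truncated profile $\Xi_j$ with $\Xi_j=\eta_j+O_{L^2}(t^{1/3})$, the error coming from $U^{(N)}-\varphi$), and then derives \eqref{E:sigma-est-0}--\eqref{E:sigma-est-3} by differentiating, cancelling the linear term using $J^*=-J$ and the self-adjointness of $E''(\psi^{(N)})$ against \eqref{E:partial-eta-0}--\eqref{E:partial-eta-3}, and estimating the nonlinear, forcing, coupling, and residual terms exactly as you describe. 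The only cosmetic difference is that the paper packages the Jacobian and cutoff factors into $\Xi_j$ so that $\kappa_j=\tfrac12\la h,J^{-1}\Xi_j\ra$ holds exactly, rather than expanding the measure term by term as you do.
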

\begin{proof}
Let
$$
\mu(r) \defeq \theta_1((r-q)/q)\lambda^{-1/2} r^{-1}
$$
(note that $\mu(r) = O(1)$) and also let
$$
\Xi_j(r,t) \defeq  \,\mu(r) e^{i\theta} e^{ivr/2}  \lambda \, \xi_j(\lambda(r-q)).
$$
Recall $\eta_j$, $j = 0,1,2,3$, from \eqref{E:etas} and observe that
\begin{equation}
\label{E:Xi-eta}
\Xi_j = \eta_j + O_{L^2(\mathbb{R}^3)}(t^{1/3}),
\end{equation}
where the error term results from $\varphi$ being replaced by $U^{(N)}$ (see \eqref{E:defU-N}).
A change of variables calculation gives
$$
\kappa_j = \tfrac12 \la f_1, J^{-1}\xi_j\ra_{L^2(-\infty<\rho<\infty)} = \tfrac12 \la h, J^{-1} \Xi_j \ra_{L^2(\mathbb{R}^3)}.
$$
Define
\begin{equation}
\label{E:A30}
\sigma_j =  \tfrac12 \la h, J^{-1} \eta_j \ra_{L^2(\mathbb{R}^3)}.
\end{equation}
then by \eqref{E:Xi-eta} $\kappa_j-\sigma_j$ satisfies the bound \eqref{E:sigma-bds}.
Then
$$
\partial_t \sigma_0 = \tfrac12 \la \partial_t h, J^{-1}\eta_0\ra + \tfrac12 \la h, J^{-1} \partial_t \eta_0 \ra.
$$
Substituting \eqref{E:higher-en1} and \eqref{E:partial-eta-0}, we obtain
$$
\partial_t \sigma_0 =
\begin{aligned}[t]
&\tfrac12 \la J(E''(\psi^{(N)}) h + 4\psi^{(N)} |h|^2 + 2\overline{\psi^{(N)}}h^2 + 2|h|^2h) - H^{(N)}, J^{-1}\eta_0\ra \\
&+ \tfrac12 \la h, J^{-1}(JE''(\psi^{(N)})\eta_0 + H^{(N)}) \ra .
\end{aligned}
$$
Since $J^*=-J$ and $E''(\psi^{(N)})$ is self-adjoint with respect to $\la \cdot, \cdot \ra$, we have the cancelation $ \la JE''(\psi^{(N)}) h, J^{-1}\eta_0\ra + \la h, J^{-1}JE''(\psi^{(N)})\eta_0 \ra=0$.  Estimating the remaining terms, we obtain \eqref{E:sigma-est-0}.
Similarly, using the estimates \eqref{E:partial-eta-1} - \eqref{E:partial-eta-3}, we obtain the rest of \eqref{E:sigma-est-1}-\eqref{E:sigma-est-3}.
\end{proof}

\begin{proposition}
\label{P:Cauchy-N}
For a sufficiently large $N$, independent of $0<\epsilon<t_0(N)$, we have
\begin{equation}
\label{E:bs-reinforced}
\|h\|_{X^1} \leq t^{2N/3}, \quad \epsilon \leq  t \leq t_0(N).
\end{equation}
\end{proposition}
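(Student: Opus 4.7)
The proof proceeds by a bootstrap/continuity argument on $[\epsilon,t_0(N)]$, starting from the trivial initial condition $h(\epsilon)=0$, which gives $G(h(\epsilon))=0$ and $\sigma_j(\epsilon)=0$. Let $T^\ast\in(\epsilon,t_0(N)]$ be the largest time on which a bootstrap hypothesis, say $\|h(t)\|_{X^1}\leq t^{2(N-1)/3}$, is maintained; by continuity $T^\ast>\epsilon$. The plan is to show, on $[\epsilon,T^\ast]$, the strictly stronger estimate $\|h\|_{X^1}\leq C\,t^{2N/3}$ with $C$ independent of $\epsilon$ and $N$. For $t_0(N)$ small enough this beats the bootstrap, so $T^\ast=t_0(N)$; the stated inequality at order $2N/3$ is then recovered by running the same argument with $N$ replaced by a larger index $N'$, since the extra factor $t^{(2N'-2N)/3}\leq t_0(N')^{(2N'-2N)/3}$ absorbs $C$.

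The first ingredient is a bound on $G(h(t))$: I integrate Proposition~\ref{P:G} from $\epsilon$ to $t$, using the bootstrap and $\|H^{(N)}\|_{X^1}\lesssim t^{(2N-1)/3}$ from \eqref{E:H-N}, together with $G(h(\epsilon))=0$, to obtain $G(h(t))\lesssim t^{4N/3}$. The second ingredient is control of the modulation parameters, obtained by solving the ODE system of Proposition~\ref{P:kappa-bds} sequentially from $\sigma_j(\epsilon)=0$, in the order $\sigma_0\to\sigma_1\to\sigma_2\to\sigma_3$: the $\sigma_0$ equation is driven only by $\|h\|_{X^1}^2$ and $\|H^{(N)}\|_{X^1}$ and yields $|\sigma_0|\lesssim t^{(2N+2)/3}$; $\sigma_1$ is driven by $h$ and $H^{(N)}$; $\sigma_2$ is additionally driven by $\sigma_0$; and $\sigma_3$ by $\sigma_1$. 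Tracking the $\lambda\sim t^{-2/3}$ weights attached to the distinct derivatives $\lambda^{-1}\partial_t(\lambda\,\cdot)$ and $\lambda\,\partial_t(\lambda^{-1}\,\cdot)$ produces $\sum_j|\sigma_j|^2\lesssim t^{4N/3}$.

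To close, I invoke Proposition~\ref{P:G-lower} and replace each $\kappa_j$ by $\sigma_j$ using $|\kappa_j-\sigma_j|\lesssim t^{1/3}\|h\|_{X^1}$; the resulting $t^{2/3}\|h\|_{X^1}^2$ error absorbs into the left side for $t$ small, yielding
\[
\|h\|_{X^1}^2\;\lesssim\;G(h)+\sum_{j=0}^3|\sigma_j|^2\;\lesssim\;t^{4N/3},
\]
hence $\|h\|_{X^1}\leq C\,t^{2N/3}$. The constant $C$ is independent of $\epsilon$ and $N$ (all constants in Propositions~\ref{P:G}, \ref{P:G-lower}, and \ref{P:kappa-bds} are), which is precisely what is needed to make the closure of the bootstrap uniform in $\epsilon$.

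The main obstacle is the sequential integration of the modulation ODEs. The cross-couplings $\sigma_2\leftarrow\sigma_0$ and $\sigma_3\leftarrow\sigma_1$, together with two different $\lambda$-weighted time derivatives (reflecting the distinct scalings of the phase/translation versus the scaling/Galilean directions), force one to juggle several competing time weights at each stage. The bounds for $\sigma_2$ and $\sigma_3$ in particular land at exactly the critical order $t^{2N/3}$ matching $\|h\|_{X^1}$, so there is essentially no spare power of $t$ in which to absorb constants within these estimates: the $\lambda$-weights and coupling exponents must be tracked precisely for the closure to work.
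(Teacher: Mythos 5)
Your overall architecture is the paper's: bootstrap from $h(\epsilon)=0$, integrate the upper bound on $\partial_t G$ from Proposition \ref{P:G}, integrate the modulation ODEs of Proposition \ref{P:kappa-bds} in the order $\sigma_0\to\sigma_1\to\sigma_2\to\sigma_3$, and close with the coercivity of Proposition \ref{P:G-lower} after trading $\kappa_j$ for $\sigma_j$. However, there is a genuine gap in how you set up and close the bootstrap. You take the bootstrap hypothesis at the \emph{weaker} order $\|h\|_{X^1}\leq t^{2(N-1)/3}$ and claim this yields $G(h)\lesssim t^{4N/3}$. It does not: the quadratic self-interaction term in \eqref{E:G-h-upperbound} gives only $t^{-1}\|h\|_{X^1}^2\lesssim t^{(4N-7)/3}$, which integrates to $G(h)\lesssim N^{-1}t^{(4N-4)/3}$, short of $t^{4N/3}$ by a factor $t^{-4/3}$. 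The same happens in the modulation system (e.g.\ the $t^{-2/3}\|h\|_{X^1}$ term in \eqref{E:sigma-est-1} gives $|\sigma_1|\lesssim t^{(2N-1)/3}$, not $t^{(2N+1)/3}$). Since the $G$-estimate is quadratic in $h$, it can never return a higher power of $t$ than the one you feed in; with your hypothesis the coercivity step returns $\|h\|_{X^1}\lesssim N^{-1/2}t^{(2N-2)/3}$, which does not beat the bootstrap in the power of $t$ and so your continuity argument does not close as written.

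The correct closure, which is the paper's, is to bootstrap at the \emph{same} order as the target, $\|h\|_{X^1}\leq 2t^{2N/3}$, and to win on the \emph{constant} rather than on the power of $t$: integrating $|\partial_t G|\lesssim t^{4N/3-1}$ produces the factor $\tfrac{3}{4N}$, and likewise for $\sigma_2,\sigma_3$, so one recovers $\|h\|_{X^1}\lesssim N^{-1/2}t^{2N/3}\leq t^{2N/3}$ once $N$ is large. This is precisely why the proposition is stated ``for $N$ sufficiently large'': large $N$ is not merely making $t^{2N/3}$ small, it is what makes the recovered constant beat the bootstrap constant $2$ (all implicit constants being $N$-independent). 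Your final device --- proving $\|h\|_{X^1}\leq Ct^{2N/3}$ and then rerunning the argument with $N'>N$ to absorb $C$ --- is also not justified as stated: $h=\psi-\psi^{(N)}$ and the initial condition $\psi(\epsilon)=\psi^{(N)}(\epsilon)$ both depend on $N$, so the $N'$-argument controls a different remainder for a different solution; transferring the conclusion back requires comparing $\psi^{(N)}$ with $\psi^{(N')}$ and handling the nonzero initial value of the new remainder. That repair is plausible but unnecessary once the bootstrap is set up at the correct order.
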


\begin{proof}
This will be proved by a bootstrap argument invoking the estimates obtained in Props. \ref{P:G}, \ref{P:G-lower}, and \ref{P:kappa-bds}.  Recall that
\begin{equation}
\label{E:HN-bd}
\|H^{(N)}\|_{X^1} \leq t^{(2N-1)/3} .
\end{equation}

We make the following bootstrap assumption:
\begin{equation}
\label{E:bs}
\forall \; 0<t\leq t_0(N), \qquad \|h\|_{X^1} \leq 2t^{2N/3} .
\end{equation}

Assumption \eqref{E:bs} will be validated provided we can show, using Propositions \ref{P:G}, \ref{P:G-lower}, and \ref{P:kappa-bds}, that \eqref{E:bs} reinforces itself -- specifically that \eqref{E:bs-reinforced}
holds as a consequence.

By \eqref{E:bs}, \eqref{E:HN-bd} inserted into \eqref{E:G-h-upperbound}
$$
|\partial_t G(h) | \lesssim t^{4N/3-1} .
$$
Integrating, we obtain
\begin{equation}
\label{E:A41}
G(h) = G(h(t))-G(h(\epsilon)) \lesssim \frac{1}{N} t^{4N/3} .
\end{equation}

By \eqref{E:bs}, \eqref{E:HN-bd}, inserted into \eqref{E:sigma-est-0},
$$
|\partial_t \sigma_0|  \lesssim t^{(2N-1)/3}.
$$
Integrating over $[\epsilon,t]$ using that $\sigma_j(\epsilon)=0$, we obtain
\begin{equation}
\label{E:A40}
|\sigma_0| \lesssim t^{(2N+2)/3} .
\end{equation}
By \eqref{E:bs}, \eqref{E:HN-bd}, \eqref{E:A40} inserted into \eqref{E:sigma-est-1} and \eqref{E:sigma-est-2}, we obtain
$$
| \lambda^{-1} \partial_t \lambda \sigma_1| \lesssim t^{(2N-2)/3} \,, \qquad
| \lambda \partial_t \lambda^{-1} \sigma_2 | \lesssim t^{(2N-3)/3} .
$$
Integrating, we obtain
\begin{equation}
\label{E:A40B}
|\sigma_1| \lesssim t^{(2N+1)/3} \,, \qquad |\sigma_2| \lesssim \frac{1}{N} t^{2N/3} .
\end{equation}
Inserting \eqref{E:bs}, \eqref{E:HN-bd}, \eqref{E:A40B} into \eqref{E:sigma-est-3}, we obtain
$$
|\lambda \partial_t \lambda^{-1} \sigma_3| \lesssim t^{(2N-3)/3} .
$$
Integrating, we obtain
\begin{equation}
\label{E:A40C}
| \sigma_3| \lesssim \frac{1}{N} t^{2N/3} .
\end{equation}
By \eqref{E:sigma-bds}, $\kappa_j$ can be replaced by $\sigma_j$ in \eqref{E:G-h-lowerbound}.  By  \eqref{E:A41}, \eqref{E:A40}, \eqref{E:A40B}, \eqref{E:A40C} inserted into  \eqref{E:G-h-lowerbound},
$$
\|h\|_{X^1}^2 \lesssim \frac{1}{N} t^{4N/3},
$$
and hence, \eqref{E:bs-reinforced} holds by taking $N$ sufficiently large.

\end{proof}

In order to construct the solution advertised in Theorem \ref{T:MainTheorem}, we will carry out a compactness argument.  For this compactness argument, we need higher regularity ($H^2$) control and tighter localization:

\begin{proposition}
For $N$ taken large enough so that Prop. \ref{P:Cauchy-N} holds, we have the further bounds
\label{P:H2-variance}
\begin{equation}
\label{E:Psi-H2}
t^{4/3}\|\nabla^2 h \|_{L^2(\mathbb{R}^3)} \lesssim t^{\frac{2N}{3}-1}
\end{equation}
and
\begin{equation}
\label{E:Psi-var}
\|x\psi(t)\|_{L^2(\mathbb{R}^3)} \lesssim  t^{1/3}
\end{equation}
for $\epsilon \leq t\leq t_0(N)$, with implicit constants independent of $N$ and $0<\epsilon \leq t_0(N)$.
\end{proposition}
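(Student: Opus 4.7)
The plan has two essentially independent parts: the $H^2$ bound \eqref{E:Psi-H2} on $h$, and the weighted $L^2$ bound \eqref{E:Psi-var} on $\psi$.

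For \eqref{E:Psi-var}, I would invoke the classical variance identity for the exact NLS solution $\psi$. A short integration-by-parts computation (the quartic contribution from the nonlinearity drops out because it is real) gives
$$
\frac{d}{dt}\|x\psi(t)\|_{L^2(\cR^3)}^2 = 4\,\Im\int_{\cR^3}\bar\psi\,(x\cdot\nabla\psi)\,dx,
$$
so that $\bigl|\tfrac{d}{dt}\|x\psi\|_{L^2}\bigr|\leq 2\|\nabla\psi\|_{L^2}$ by Cauchy--Schwarz. Combining the explicit bound $\|\nabla\psi^{(N)}\|_{L^2}\lesssim t^{-2/3}$ (read off from the push-forward structure \eqref{E:Psi-N} together with $\lambda\sim t^{-2/3}$) with the $X^1$ control of Proposition \ref{P:Cauchy-N}, I obtain $\|\nabla\psi\|_{L^2}\lesssim t^{-2/3}$. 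Integrating from the Cauchy time $\epsilon$ to $t$ and using the initial condition $\psi(\epsilon)=\psi^{(N)}(\epsilon)$, which is concentrated at radius $\sim\epsilon^{1/3}$ and therefore satisfies $\|x\psi^{(N)}(\epsilon)\|_{L^2}\lesssim\epsilon^{1/3}$, yields $\|x\psi(t)\|_{L^2}\lesssim \epsilon^{1/3}+(t^{1/3}-\epsilon^{1/3})\lesssim t^{1/3}$, with an implicit constant uniform in $\epsilon$ and $N$.

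For \eqref{E:Psi-H2}, the plan is to promote the Lyapunov/bootstrap framework of Proposition \ref{P:Cauchy-N} one level of regularity higher. I would perform a direct energy estimate on $\|\Delta h\|_{L^2}^2$ using the equation
$$
i\partial_t h + \Delta h = -2\bigl(|\psi|^2\psi-|\psi^{(N)}|^2\psi^{(N)}\bigr) - iH^{(N)}
$$
satisfied by $h$. The bi-Laplacian contribution $\int\Delta\bar h\,\Delta^2 h$ cancels after one integration by parts (purely imaginary). Expanding the nonlinearity around $\psi^{(N)}$ into pieces quadratic, cubic, and quartic in $h$, I would redistribute derivatives by integration by parts and control each resulting term using the pointwise bounds $\|\partial^j\psi^{(N)}\|_{L^\infty}\lesssim t^{-(2j+2)/3}\mathbf{1}_{r\sim t^{1/3}}$ (from \eqref{E:decayU-N}), the radial Sobolev inequality \eqref{E:deriv-p13}, and the already-established $X^1$ bound $\|h\|_{X^1}\lesssim t^{2N/3}$. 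The source contribution is handled by \eqref{E:H-N} at $k=2$, which yields $\|\Delta H^{(N)}\|_{L^2}\lesssim t^{(2N-5)/3}$. Gronwall on $[\epsilon,t]$, with $h(\epsilon)=0$, then closes the bootstrap at the $H^2$ level and produces \eqref{E:Psi-H2} for $N$ large.

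The main obstacle is in the $H^2$ estimate: the contributions where two derivatives fall on $\psi^{(N)}$ carry $\|\Delta\psi^{(N)}\|_{L^\infty}$ as large as $t^{-2}$ on the concentration shell. Taming them requires integration by parts to move derivatives off the singular factor, the volume bound $|\{r\sim t^{1/3}\}|\sim t$, and the radial Sobolev gain \eqref{E:deriv-p13} applied to $h$ on that region, so that every dangerous term is either absorbed into a Gronwall factor of the form $t^{-1}\|\Delta h\|_{L^2}^2$ or dominated by the forcing $\|\Delta H^{(N)}\|_{L^2}$. Once this is done, both \eqref{E:Psi-H2} and \eqref{E:Psi-var} hold with constants independent of $\epsilon$ and $N$, as needed for the compactness argument in \S 4.
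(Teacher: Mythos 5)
Your treatment of \eqref{E:Psi-var} is correct and takes a genuinely different (and more elementary) route than the paper. The paper integrates the pseudoconformal conservation law $\partial_t\bigl(\tfrac12\|(x+2it\nabla)\psi\|_{L^2}^2-2t^2\|\psi\|_{L^4}^4\bigr)=2t\|\psi\|_{L^4}^4$, controls $\|\psi\|_{L^4}^4=\|\nabla\psi\|_{L^2}^2-E(\psi)\lesssim t^{-4/3}$ via energy conservation, and then extracts $\|x\psi\|_{L^2}$ from $\|(x+2it\nabla)\psi\|_{L^2}\lesssim t^{1/3}$ by the triangle inequality. Your first-order virial identity $\tfrac{d}{dt}\|x\psi\|_{L^2}^2=4\Im\int\bar\psi\,(x\cdot\nabla\psi)\,dx$ plus Cauchy--Schwarz and the gradient bound $\|\nabla\psi\|_{L^2}\lesssim t^{-2/3}$ (which both arguments need, and which does follow from Prop.~\ref{P:Cauchy-N}) reaches the same conclusion with less machinery; the only point to note is that both arguments require propagating $\Sigma=H^1\cap L^2(|x|^2dx)$ regularity from the data $\psi^{(N)}(\epsilon)$, which is standard.

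For \eqref{E:Psi-H2}, however, there is a genuine gap, and it is not the one you flag. The fatal term in a direct energy estimate on $\|\Delta h\|_{L^2}^2$ is not the one where derivatives fall on $\psi^{(N)}$; it is the diagonal contribution of the conjugation part of the linearized operator, namely $\Re\bigl(2i\int(\psi^{(N)})^2(\overline{\Delta h})^2\bigr)$, which does not vanish and is of size $t^{-4/3}\|\Delta h\|_{L^2}^2$ since $|\psi^{(N)}|\sim\lambda\sim t^{-2/3}$ on its support. This coefficient cannot be improved to $t^{-1}$ by integration by parts or by the radial Sobolev inequality: it multiplies the top-order quantity itself. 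A Gronwall factor $\exp\bigl(C\int_\epsilon^t s^{-4/3}ds\bigr)=\exp\bigl(C(\epsilon^{-1/3}-t^{-1/3})\bigr)$ is unbounded as $\epsilon\to0$, and a bootstrap does not rescue you either: each integration of $t^{-4/3}\|\Delta h\|_{L^2}^2$ against an ansatz $\|\Delta h\|_{L^2}\leq t^{\alpha}$ returns $t^{\alpha-1/3}$, losing a factor $t^{-1/3}$ no matter how large $N$ is. This is exactly the obstruction that forces the paper, already at the $H^1$ level in Prop.~\ref{P:Cauchy-N}, to abandon direct energy estimates in favor of the Lyapunov functional $W$ of \eqref{E:W-def}: the dangerous quadratic contribution is cancelled by the Hamiltonian structure (the Poisson bracket reduces to $-v\lambda^{-2}\{E,\mathcal{P}_q\}$, which is genuinely of size $t^{-1}\|\cdot\|_{X^1}^2$), and coercivity of $W''(\psi^{(N)})$ holds only modulo the four directions $\xi_0,\dots,\xi_3$, which must be tracked separately via the ODEs of Prop.~\ref{P:kappa-bds}. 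The paper's proof of \eqref{E:Psi-H2} therefore differentiates the equation, sets $\tilde\psi=\psi^{(N)}+\partial_jh$, and reruns the entire $G$/coercivity/projection apparatus on $\partial_jh$, obtaining $\|\partial_jh\|_{X^1}^2\lesssim G(\partial_jh)+\sum_i|\tilde\kappa_i|^2$. Your plan omits both the cancellation mechanism and the projections onto the (generalized) kernel directions, and as written it cannot close.
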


\begin{proof}
Recall
\begin{equation}
\label{E:higher-en1}
\partial_t h =J(E''(\psi^{(N)}) h + 8\psi^{(N)} |h|^2 + 4\overline{\psi^{(N)}}h^2 + 4|h|^2h) - H^{(N)},
\end{equation}
where
$$
JE''(\psi^{(N)}) = +i(\Delta + 4 |\psi^{(N)}|^2 +2 (\psi^{(N)})^2 \mathcal{C})
$$
and $\mathcal{C}$ denotes the operator of complex conjugation.  We have available $H_x^1$ control of $h$ and thus seek a ``higher-order energy estimate''.
Applying $\partial_j$ ($j=1,2,3$) to \eqref{E:higher-en1}, we obtain
\begin{equation}
\label{E:higher-en25}
\partial_t \partial_j h = JE''(\psi^{(N)}) \partial_j h +  A(h) + B(h,\partial_j h) - \partial_j H^{(N)},
\end{equation}
where $A(h)$ contains terms of the form $\psi_N \cdot \partial_j\psi_N \cdot h$ (up to complex conjugation) and $B(h,\partial_j h)$ denotes terms that contain one power of $\partial_j h$ or $\partial_j \bar h$ and at least one power of $h$.   Thus $\partial_j h$, modulo error terms, satisfies at the linear level the same equation satisfied by $h$ itself.
Let
$$\tilde \psi \defeq \psi^{(N)}+ \partial_j h \,.$$
Then by \eqref{E:Ham2} and \eqref{E:higher-en25},
\begin{equation}
\label{E:higher-en21}
\partial_t \tilde \psi = JE'(\psi^{(N)}) + JE''(\psi^{(N)}) \partial_j h +  A(h) + B(h,\partial_j h) + H^{(N)} - \partial_j H^{(N)}.
\end{equation}
Note the expansion
\begin{equation}
\label{E:higher-en26}
JE'(\tilde \psi) = JE'(\psi^{(N)}+ \partial_jh) = JE'(\psi^{(N)}) + JE''(\psi^{(N)})\partial_j h + JF,
\end{equation}
where
\begin{align*}
F &\defeq E'(\tilde \psi) - E'(\psi^{(N)}) - E''(\psi^{(N)})\partial_j h \\
&=4\psi^{(N)}|\partial_jh|^2 + 2 \overline{\psi^{(N)}} (\partial_j h)^2 + 2|\partial_j h|^2 \partial_j h.
\end{align*}
Using the expansion \eqref{E:higher-en26}, the equation \eqref{E:higher-en21} becomes
\begin{equation}
\label{E:higher-en20}
\partial_t \tilde \psi = JE'(\tilde \psi)  +D -JF,
\end{equation}
where
$$
D \defeq A(h) + B(h,\partial_j h) +  H^{(N)} - \partial_j H^{(N)}.
$$
We now use the functional $G$ that appeared in Lemma \ref{L:def-G} and Prop. \ref{P:G}, \ref{P:G-lower}, where we replace $\psi$ by $\tilde \psi$.  Specifically, in place of \eqref{E:G-def} we take
$$
G(\partial_j h) \defeq W( \tilde \psi) - W(\psi^{(N)}) - \la W'(\psi^{(N)}),\partial_j h \ra.
$$
A slightly modified version of Lemma \ref{L:def-G} follows, in which \eqref{E:higher-en20} is applied in place of \eqref{E:Ham1}, and the following identity is obtained in place of \eqref{E:deriv-p6}
$$
\partial_t G(\partial_j h) = \{E,W\}(\tilde \psi) - \{E,W\}(\psi^{(N)}) - \la \{E,W\}'(\psi^{(N)}), \partial_j h \ra - \mathcal{E}_1 + \mathcal{E}_2,
$$
where
$$
\mathcal{E}_1 =
\begin{aligned}[t]
&\la W''(\psi^{(N)})H^{(N)}, \partial_j h\ra - \la W'(\tilde\psi) - W'(\psi^{(N)}), D-JF\ra \\
&+ \la W'(\psi^{(N)}), JF\ra
\end{aligned}
$$
and $\mathcal{E}_2$ is the same as in \eqref{E:E2}, but with $\psi$ replaced by $\tilde \psi$.
The proof of Prop. \ref{P:G} modifies accordingly, to yield in place of \eqref{E:G-h-upperbound}
\begin{equation}
\label{E:higher-en23}
| \partial_t G(\partial_j h) | \lesssim t^{-1} \|\partial_j h \|_{X^1}^2 +  t^{\frac{4N}{3}-3}
\end{equation}
Indeed, $\mathcal{E}_1$ is estimated using $\|W'(\psi^{(N)})\|_{L^2} \lesssim t^{1/3}$, and
$$\|F\|_{L^2} + t^{2/3} \| 1_{r\sim t^{1/3}} \nabla F\|_{L^2} \lesssim t^{-4/3} \| \partial_j h \|_{X^1}^2$$
$$\|D\|_{L^2} + t^{2/3} \| 1_{r\sim t^{1/3}} \nabla D \|_{L^2} \lesssim t^{-2} \|h\|_{X^1} + t^{-4/3} \|h\|_{X^1} \|\partial_j h \|_{X^1}$$
By an argument following the proof of Prop. \ref{P:G-lower},
\begin{equation}
\label{E:higher-en3}
\| \partial_j h \|_{X^1}^2 \lesssim G(\partial_j h) + \sum_{i=0}^3 |\tilde \kappa_i|^2,
\end{equation}
where $\tilde \kappa_i = \la \partial_j h, \frac12 J^{-1} \Xi_i\ra$ and $\Xi_i$ is given in \eqref{E:Xi-eta}.  The equations \eqref{E:higher-en23} and \eqref{E:higher-en3} can be combined to yield \eqref{E:Psi-H2}.

Next, we will establish \eqref{E:Psi-var}.  The pseudoconformal conservation law (see Strauss \cite[p. 13]{Strauss}) is
\begin{equation}
\label{E:A46}
\partial_t \left( \frac12 \|(x+2it\nabla) \psi\|_{L^2(\mathbb{R}^3)}^2 -2t^2 \|\psi\|_{L^4(\mathbb{R}^3)}^4\right) = 2t \|\psi\|_{L^4(\mathbb{R}^3)}^4 .
\end{equation}
By energy conservation,
\begin{equation}
\label{E:A45}
\|\psi\|_{L^4(\mathbb{R}^3)}^4 = \|\nabla \psi\|_{L^2(\mathbb{R}^3)}^2 - E(\psi) \lesssim t^{-4/3} .
\end{equation}
By  \eqref{E:E-N-t-bound-3}, $E(\psi) = E(\psi^{(N)}(\epsilon)) \sim 1$ (independently of $\epsilon$), and hence, \eqref{E:A45} yields
$$
\| \psi\|_{L^4(\mathbb{R}^3)} \lesssim t^{-1/3} .
$$
Applying this to \eqref{E:A46}, we obtain
$$
\|(x+2it\nabla) \psi\|_{L^2(\mathbb{R}^3)} \lesssim \|(x+2it\nabla) \psi^{(N)} \big|_{t=\epsilon} \|_{L^2(\mathbb{R}^3)} + t^{1/3}.
$$
By the localization of $\psi^{(N)}$ to $|x|\sim t^{1/3}$, we have
$$
\|(x+2it\nabla)\psi^{(N)}(t)\|_{L^2} \sim t^{1/3},
$$
from which \eqref{E:Psi-var} follows.
\end{proof}

\section{Proof of Theorem \ref{T:MainTheorem}}

Fix $N$ sufficiently large so that Propositions \ref{P:Cauchy-N} and \ref{P:H2-variance} are applicable.  Let $t_0=t_0(N)$.  For any sequence $\epsilon_j \to 0$, let $\psi_j$ be the solution to \eqref{E:NLS-1} as described at the beginning of \S \ref{S:comparison} with $\epsilon=\epsilon_j$.  By Proposition \ref{P:Cauchy-N} and \ref{P:H2-variance}, we have
$$
\|\psi_j(t_0)\|_{X^2(\mathbb{R}^3)} + t^{-1/3}\|x\psi_j(t_0)\|_{L^2} \lesssim 1
$$
with implicit constant independent of $j$.  By the Rellich compactness theorem, we can pass to a subsequence such that $\psi_j(t_0) \to \psi_0$ in $X^1(\mathbb{R}^3)$.  Let $\psi$ be the solution to \eqref{E:NLS-1} with $\psi(t_0)=\psi_0$.  By Proposition \ref{P:Cauchy-N}
\begin{equation}
\label{E:A47}
\sup_{\epsilon_j \leq t\leq t_0} \|\psi_j(t)-\psi^{(N)}(t)\|_{X^1(\mathbb{R}^3)} \leq t^{2N/3} .
\end{equation}
By continuous dependence on initial conditions in the  Cauchy problem, for each fixed $0<t\leq t_0$, we have $\psi_j(t) \to \psi(t)$ in $H^1(\mathbb{R}^3)$ as $j\to \infty$.  Hence, we can send $j\to \infty$ in \eqref{E:A47} to obtain
$$
\sup_{0<t\leq t_0} \|\psi(t)-\psi^{(N)}(t)\|_{X^1(\mathbb{R}^3)} \leq t^{2N/3} .
$$
This yields the bound on the first two terms in \eqref{E:h-est}.  To deduce the bound on the third term in \eqref{E:h-est}, we follow the argument in the proof of Proposition \ref{P:H2-variance} utilizing the pseudoconformal conservation law.

\end{document}